\documentclass[onefignum,onetabnum]{siamart190516}



\usepackage{lipsum}
\usepackage{amsmath,amsfonts,amssymb,amscd}
\usepackage{graphicx}
\usepackage{subcaption}
\usepackage{enumerate}
\usepackage{graphicx}
\usepackage{listings}
\usepackage{epstopdf}
\usepackage{algorithmic}
\usepackage{xcolor}
\usepackage{tikz}
\usepackage{pgfkeys}
\usepackage{adjustbox}
\usepackage{footnotehyper}
\usetikzlibrary{shapes,arrows}
\ifpdf
  \DeclareGraphicsExtensions{.eps,.pdf,.png,.jpg}
\else
  \DeclareGraphicsExtensions{.eps}
\fi


\newsiamremark{remark}{Remark}
\newsiamremark{hypothesis}{Hypothesis}
\crefname{hypothesis}{Hypothesis}{Hypotheses}
\newsiamthm{claim}{Claim}

\headers{Systematic Lyapunov analyses of continuous-time models}{C. Moucer, A. Taylor, F. Bach}

\title{A systematic approach to Lyapunov analyses of continuous-time models in convex optimization 
}


\author{Céline Moucer\footnotemark[2]
\thanks{Ecole Nationale des Ponts et Chaussées, Marne-la-Vallée, France.}
\and Adrien Taylor\thanks{DI ENS, \'Ecole normale supérieure, Université PSL, CNRS, INRIA, 75005 Paris, France (\email{celine.moucer@inria.fr}), (\email{adrien.taylor@inria.fr}), (\email{francis.bach@inria.fr}).}
\and Francis Bach\footnotemark[2].}

\usepackage{amsopn}


\lstset{%
  language=Python,
  basicstyle   = \ttfamily,
  keywordstyle =    \color{orange},
  keywordstyle = [2]\color{magenta},
  commentstyle =    \color{gray}\itshape,
  stringstyle  =    \color{cyan},
  numbers      = left,
  numberstyle  = \tiny,
  frame        = single,
  framesep     = 2pt,
  aboveskip    = 1ex
}
\lstdefinestyle{style1}{
    basicstyle=\ttfamily,
    keywordstyle =    \color{black}, 
    keywordstyle =    \color{black}, 
    commentstyle =    \color{black},
    stringstyle  =    \color{black},
    columns=fullflexible,
    keepspaces=true,
    upquote=true,
}

\usepackage{pgfplots}
\pgfplotsset{compat=1.15}

\tikzstyle{loosely dashed}=          [dash pattern=on 2pt off 4pt]
	
\pgfplotsset{plotOptions1/.style={%
		width=\linewidth,
				ymax=2.5,ymin=0.0001,
				xmin=0.0001,xmax=1.5,
		xlabel={Strong convexity parameter $\mu$},
		ylabel={Worst-case guarantee},
		label style={font=\footnotesize},
		legend style={font=\scriptsize},
		ytick={0.0001, 0.001, 0.01, 0.1, 1},
		xtick={0.0001, 0.001, 0.01, 0.1, 1},
		tick label style={font=\footnotesize},
		solid,
		very thick
	}}

\pgfplotsset{plotOptions2/.style={%
		width=\linewidth,
				ymax=5,ymin=-0.5,
				xmin=-10,xmax=10,
		xlabel={Step-size},
		ylabel={Worst-case guarantee},
		label style={font=\footnotesize},
		legend style={font=\scriptsize},
		ytick={0, 1, 2, 3, 4, 5},
		xtick={-10, -5,   0, 5, 10},
		tick label style={font=\footnotesize},
		solid,
		very thick
	}}
	
\pgfplotsset{plotOptions3/.style={%
		width=\linewidth,
				ymax=2.1,ymin=0.001,
				xmin=0.0001,xmax=1.5,
		xlabel={Strong convexity parameter $\mu$},
		ylabel={Worst-case guarantee},
		label style={font=\footnotesize},
		legend style={font=\scriptsize},
		ytick={0.001, 0.01, 0.1, 1},
		xtick={0.0001, 0.001, 0.01, 0.1, 1},
		tick label style={font=\footnotesize},
		solid,
		very thick
	}}
	
\pgfplotsset{plotOptions4/.style={%
		width=\linewidth,
				ymax=2.1,ymin=0.000005,
				xmin=0.0001,xmax=1.5,
		xlabel={Strong convexity parameter $\mu$},
		ylabel={Worst-case guarantee},
		label style={font=\footnotesize},
		legend style={font=\scriptsize},
		ytick={0.00001, 0.0001, 0.001, 0.01, 0.1, 1},
		xtick={0.0001, 0.001, 0.01, 0.1, 1},
		tick label style={font=\footnotesize},
		solid,
		very thick
	}}

\ifpdf
\hypersetup{
  pdftitle={Systematic continuous-time analyses for first-order optimization},
  pdfauthor={C. Moucer, A Taylor, F. Bach},
  backref=page
}
\fi




\begin{document}

\maketitle
 
\begin{abstract}
First-order methods are often analyzed via their continuous-time models, where their worst-case convergence properties are usually approached via Lyapunov functions. In this work, we provide a systematic and principled approach to find and verify Lyapunov functions for classes of ordinary and stochastic differential equations. More precisely, we extend the performance estimation framework, originally proposed by Drori and Teboulle~\cite{drori2012performance}, to continuous-time models. We retrieve convergence results comparable to those of discrete-time methods using fewer assumptions and inequalities, and provide new results for a family of stochastic accelerated gradient flows.
\end{abstract}

\begin{keywords}
  Convex optimization, continuous-time models, first-order methods, worst-case analyses, performance estimation, stochastic differential equations, ordinary differential equations.
\end{keywords}



\section{Introduction}

Convex optimization is an important tool in the numerical analyst toolbox. It serves, among others, for framing modeling problems in data science and signal processing. We consider optimization problems of the form:
\begin{equation}
    \min_{x \in \mathbf{R}^d} f(x),
    \label{eq:optim}
\end{equation}
where $f$ is convex and differentiable. First-order methods (that gather information about $f$ by evaluating its gradient at past iterates) are very popular to solve these problems, due to their attractive low cost per iteration, and to the fact that data science applications typically do not require very accurate solutions~\cite{bottou2007tradeoffs}. Gradient descent is a common first-order method, which starts from a point $x_0 \in \mathbf{R}^d$ and whose iterates are given by the simple recursion
\begin{equation}
    x_{k+1} = x_k - \gamma \nabla f(x_k),
    \label{eq:gd}
\end{equation}
where $\gamma>0$ is a step size. Gradient descent with small step sizes is directly related to the so-called gradient flow:
\begin{equation}
\label{eq:gf_init}
\begin{aligned}
   \dot{X}_t &= -\nabla f(X_t), \ X_0=x_0 \in \mathbf{R}^d,
\end{aligned}
\end{equation}
with the notation $\dot{X}_t \triangleq \frac{d}{dt}X_t$ and where the solution $X_t$ of the ordinary differential equation (ODE) verifies $X_{t_k} \approx x_k$ with the identification $t_k = \gamma k$. In numerical integration, gradient descent~\eqref{eq:gd} is also known as the explicit Euler scheme for integrating gradient flows. Recently, Su et~al.~\cite{Su2015} have interpreted Nesterov's accelerated gradient~\cite{nesterov1983} in a similar fashion through its continuous-time version, paving the way to several continuous-time analyses of accelerated methods~\cite{shi2018understanding, wilson2016lyapunov, wibisono2016variational}.
\smallbreak
Many applications entail some randomness and require a stochastic modeling of the function $f$, which is often defined in terms of an expectation $f(x) = \mathbf{E}_\xi[\tilde{f}(x, \xi)]$. The function $f$ is the expectation over some random variable $\xi$, and accounts for some stochastic modeling. When $\xi$ is drawn uniformly from a finite set of possible samples $(\xi_1, ..., \xi_n)$, we have a finite sum $f(x) = \frac{1}{n}\sum_{k=1}^n\tilde{f}(x, \xi_{k})$. As soon as the number of data points $n$ is large, computing the gradient of a finite sum, as it is done in gradient-based methods, is possibly expensive (computing the gradient of each element of the sum, which is possibly very large). Stochastic gradient descent (SGD) provides an alternative with lower computational burden per iteration, by evaluating only the gradient of a single $\tilde{f}(\cdot, \xi_{i_k})$ per iteration:
\begin{equation*}
    x_{k+1} = x_k - \gamma \nabla \tilde{f}(x_k, \xi_{i_k}),
\end{equation*}
where $\gamma>0$ is the step size, $\xi_{i_k}$ is drawn uniformly at random in $(\xi_1, ..., \xi_n)$. Thereby $\nabla \tilde{f}(x_k, \xi_{i_k})$ is an unbiased estimate of the full gradient: $\mathbf{E}_{i_k}[\nabla \tilde{f}(x_k, \xi_{i_k})] = \nabla f(x_k)$. Li et~al.~\cite{li2017stochastic, Li2019} derived stochastic differential equations (SDE) approximating~SGD:
\begin{equation*}
    dX_t = -\nabla f(X_t)dt + \sigma(X_t)dB_t,
\end{equation*}
where $\sigma(X_t)$ is a noise parameter connected to parameters of the method, and these were further developed by Shi et~al.~\cite{shi2020learning, xu2018}. Relying on approximate theorems between SDEs and original stochastic gradient algorithms, SDEs have thus become a tool for analyzing convergence speeds of discrete-time methods. Usually, gradient flows (resp. first-order methods) are studied via worst-case convergence properties, that hold for any function of a given class, and any trajectory generated by the ODE (resp. optimization method). In many cases, continuous-time approaches seem to allow for shorter, simpler, and thereby more intuitive proofs. They also bring insights on what can be expected from optimization methods. 
\smallbreak
The analysis of continuous-time models often relies on Lyapunov stability arguments, as in system theory and physics, where energy dissipation plays a crucial role. The existence of such Lyapunov functions provides direct convergence proofs for ODEs under consideration. The main challenge in the Lyapunov approach is to find a suitable function that is decreasing along all trajectory generated by an~ODE.
\smallbreak
From an outsider point of view, these analyses are often seen as complicated and technical to reach. In this work, we remedy this problem by extending the systematic approach based on semidefinite programming (SDP) originally coined by Drori and Teboulle~\cite{drori2012performance} for certifying convergence of optimization methods. This technique is referred as ``performance estimation problems'' (PEPs). The main contribution of this work is to provide a tool for analyzing convergence of continuous-time models, by constructing Lyapunov functions suited to a gradient-based ODE in a systematic way, using small-sized SDPs reformulations. Furthermore, this procedure benefits from tightness properties, meaning that the feasibility of the SDP allows to conclude that there exists a Lyapunov function within the prescribed family of Lyapunov functions under consideration. Reciprocally, infeasibility of the SDP allows to conclude that there exists no such valid Lyapunov function within the family.

\subsection{Lyapunov functions}

Lyapunov functions are a standard tool for dealing with convergence properties of gradient flows. Such functions are also more generally used for studying stability properties of dynamical systems~\cite{Kalman1960}. More precisely, consider a differentiable function $f$ within a class $\mathcal{F}$, and an ODE aiming at minimizing $f$. For such a continuous-dynamical system with a stationary point $x_\star \in \text{argmin}_x f(x)$, we call $\mathcal{V}: \mathbf{R}^d \times \mathbf{R}^+ \rightarrow \mathbf{R}$ a Lyapunov function if it is differentiable and satisfies the following conditions for all trajectory $X_t$ generated by the ODE:
\begin{itemize}
    \item $\mathcal{V}(x, t)=0 \iff x = x_\star$,
    \item $\mathcal{V}(X_t, t) \geqslant 0$,
    \item $\frac{d}{dt}\mathcal{V}(X_t, t) \leqslant 0$,
\end{itemize}
for all $t\geqslant 0$.
\smallbreak
Lyapunov functions are suited for deriving both linear (or exponential) and sublinear convergence rates. When looking for linear convergence rates (as we may expect for strongly convex functions), we typically use $\frac{d}{dt}\mathcal{V}(X_t) \leqslant -\tau \mathcal{V}(X_t)$ instead of the third condition, where $\tau$ depends on the class of functions and on the ODE (more details Remark~\ref{remark_lyap}). In this ad-hoc definition, we enforced nonnegativity along the trajectory $X_t$, but definitions often require nonnegativity on $\mathbf{R}^d$~\cite{taylor2018lyapunov}. There exist similar definitions of Lyapunov functions for discrete-time optimization methods~\cite{taylor2018lyapunov, sanzserna2021connections, lessard2016analysis}
\smallbreak
With this approach, convergence guarantees highly depend on how rich is the family of Lyapunov functions under consideration. In this work, we use a family of quadratic Lyapunov functions that is popular and natural for studying both discrete-time~\cite{nesterov1983, 2021taylordaspremont} and continuous-time~\cite{Su2015} optimization schemes.

\subsection{Prior works}
Lyapunov functions are common for analyzing continuous-time and discrete-time models in convex optimization. For example, convergence proofs for Nesterov's accelerated gradient method typically rely on such Lyapunov analyses~\cite{nesterov1983}~\cite[Theorem 4.8]{2021taylordaspremont}. In the recent~\cite{bansal2019potential}, the authors proposed Lyapunov-based analyses for many first-order methods, for linear and sublinear convergence rates. Continuous-time versions of optimization methods also often involve Lyapunov arguments, such as Nesterov's accelerated gradient flow introduced in~\cite{Su2015}, and its high-resolution ODEs for strongly convex functions proposed in~\cite{shi2018understanding}, or accelerated mirror descent whose continuous-time dynamics was analyzed in~\cite{Krichene2015}. 
\smallbreak
Different techniques were developed to compute suitable Lyapunov functions. The authors of~\cite{wibisono2016variational} put forward an approach based on Bregman Lagrangian for accelerated methods in potentially non-Euclidean settings, further developed in~\cite{wilson2016lyapunov}.~\cite{Diakonikolas2021} directly derived Lyapunov functions from Hamiltonian equations describing dynamics of ODEs. Using similar conservation laws in a dilated coordinate system,~\cite{suh2022continuoustime} also generated Lyapunov functions in a principled way.
\smallbreak
Given a class of functions and an optimization method, proving a convergence rate mostly consists in combining inequalities characterizing the class of functions at hand. Recently, the automated search for combination of inequalities formulated as semidefinite programs was pioneered by Drori and Teboulle~\cite{drori2012performance}, and led to the notion of performance estimation problems. Their work was followed up in~\cite{taylor2016smooth,2017taylor} to provide worst-case bounds in a principled way, and extended to the Lyapunov framework~\cite{taylor2018lyapunov}. A competing strategy inspired by control theory was developed by~\cite{lessard2016analysis,hu2017dissipativity}, where Lyapunov functions for discrete-time models are constructed using integral quadratic constraints (IQCs) and semidefinite programming; a similar approach was applied to continuous-time models in~\cite{fazlyab2018analysis}. Connections between Lyapunov functions obtained via the IQC framework in continuous-time and discrete-time were highlighted by~\cite{sanzserna2021connections}.
\smallbreak
For stochastic differential equations (SDEs), convergence proofs can also be obtained through the Lyapunov approach, together with Ito's calculus. For some well-chosen Lyapunov functions,~\cite{orvieto2020continuoustime} analyzed both SGD, SAGA~\cite{defazio2014}, and SVRG~\cite{Johnson2013}.~\cite{xu2018, xu2018strongly} extended the framework of Bregman Lagrangian Lyapunov functions~\cite{wibisono2016variational} to the stochastic setting. To the best of our knowledge, a systematic way of verifying a Lyapunov function for SDEs has not been developed yet.

\subsection{Contributions and organization}
In this work, we are concerned with worst-case convergence analyses of ODEs and SDEs for modeling (stochastic) gradient based optimization methods. We propose a principled approach to worst-case analyses based on Lyapunov functions, SDPs and Ito's calculus.
\smallbreak
In Section~\ref{sec:deterministic}, we extend the performance estimation approach developed for optimization methods to gradient flows, which originates from a (possibly strongly) convex function. In short, we find Lyapunov functions as feasible points of certain linear matrix inequalities (LMIs). All codes for reproducing numerical results can be found at \url{https://github.com/CMoucer/PEP_ODEs}. Following this work, we have also added continuous-time models to the Python package PEPit~\cite{goujaud2022}. After that, building on the first part of this work for gradient-based ODEs, we analyze continuous-time versions of stochastic optimization algorithms. 
\smallbreak
Section~\ref{sec:sde_for_sge} studies properties of trajectories generated by SDEs that approximates stochastic gradient methods. We obtain a simple version of the trade-off between forgetting the initial conditions and diminishing the noise, with and without averaging. It appears that decreasing step sizes, together with a nonuniform version of averaging, allows to reach an optimal trade-off between the two terms. Our results match those obtained for the stochastic gradient method, but with more compact analyses than those of the discrete-time setting.
\smallbreak
In Section~\ref{sec:extensions}, we prove that stochastic accelerated gradient flows require diminishing step sizes to converge in our setting. In contrast to first-order stochastic gradient flows, averaging does not preserve convergence for SDEs approximating accelerated gradient methods with constant step size. 

\subsection{Assumptions}
\label{sec:assumptions}
Throughout this work, functions to be minimized are convex (see Problem~\eqref{eq:optim}). Under this assumption, stationary points are global minimizers. We restrict ourselves to continuous-time versions of gradient descent, accelerated gradient descent and stochastic gradient descent.
\smallbreak
Let us recall a few basic definitions and properties characterizing the classes of functions under consideration within the next sections. A function $f : \mathbf{R}^d \rightarrow \mathbf{R}$ is convex if for all $x,y \in \mathbf{R}^d$, and for all $\lambda \in [0,1]$, $f(\lambda x + (1-\lambda)y) \leqslant \lambda f(x) + (1-\lambda)f(y)$. Such functions (i.e. with full domain, $\text{dom}(f) = \mathbf{R}^d$) are convex closed proper (CCP) (i.e., their epigraphs are non-empty closed convex sets). For simplicity, we assume in addition differentiability of $f$.  Such a differentiable function $f$ is convex if and only if for all $x,y \in \mathbf{R}^d$, $f(y) \geqslant f(x) + \langle \nabla f(x), y-x \rangle$. A differentiable function $f$ is $L$-smooth if its gradient is $L$-Lipschitz, that is if for any $x,y  \ \in \mathbf{R}^d$:
\begin{equation*}
        \|\nabla f(x) - \nabla f(y)\| \leqslant L \|x-y\|.
\end{equation*}
Smoothness is a common assumption for analyzing optimization methods, that limits the growth rate of the function. A convex differentiable function $f$ is $\mu$-strongly convex if for any $x, y \in \mathbf{R}^d$ it satisfies:
\begin{equation*}
         \|\nabla f(x) - \nabla f(y)\| \geqslant \mu \|x-y\|.
\end{equation*}
Strong convexity ensures the function is not too flat, and the unicity of the minimizer~$x_\star$. We denote by $\mathcal{F}_{\mu,L}$ the family of $L$-smooth $\mu$-strongly convex functions from $\mathbf{R}^d$ to $\mathbf{R}$, with $0 \leqslant \mu \leqslant L \leqslant +\infty$. Weaker assumptions than strong convexity are also encountered in the literature for analyzing gradient algorithms, and leads to similar convergence guarantees. Among them, Bolte~et~al.~\cite[Appendix 5.2]{bolte2009} proved linear convergence of gradient descent under Łojasiewicz inequality, first introduced by Łojasiewicz~\cite{Lojasiewicz1963}. Other relaxed versions of strong convexity followed~\cite{necoara2019linear}.

\section{A principled approach to Lyapunov functions for gradient flows}
\label{sec:deterministic}

In this section, we study convergence properties of the gradient flow and its accelerated versions, via quadratic Lyapunov functions. In this context, we show that verifying such a Lyapunov function can be formulated as verifying the feasibility of an LMI. This framework allows to search for Lyapunov functions, and to derive convergence bounds for non-autonomous gradient flows.

\subsection{The gradient flow} Let us consider the gradient flow: 
\begin{equation*}
    \begin{aligned}
    \dot{X_t} &= - \nabla f (X_t), \ 
    X_0 = x_0 \in \mathbf{R}^d.
    \end{aligned}
\end{equation*}
Let $x_\star$ be a global minimizer of $f$. Without further assumptions, the function $f$ is decreasing along the trajectory $X_t$ solution to the gradient flow. The Lyapunov function $\mathcal{V}(X_t) = f(X_t) - f(x_\star)$ is indeed nonnegative, equal to zero at $x_\star$ and has a nonpositive derivative with respect to time $\frac{d}{dt}\mathcal{V}(X_t) = \dot{X_t}^\top \nabla f(X_t) = -\|\nabla f(X_t)\|^2$.
\smallbreak

In the next section, we show how to obtain and verify such Lyapunov functions, and their corresponding convergence rates in the case of gradient flow originating from a strongly convex function.

\subsubsection{Minimizing strongly convex functions}
\label{sec:methodo}
Let $f$ be $\mu$-strongly convex (i.e., $f\in\mathcal{F}_{\mu,\infty}$) with $\mu>0$, and $x_\star$ be its unique minimizer such that $f(x_\star) = f_\star$. In this context, it is possible to prove linear convergence of the gradient flow to its stationary point. Scieur et~al.~proved in~\cite[Proposition 1.1]{scieur2017integration} the following convergence bound in function values for the gradient flow:
\begin{equation}
\label{reflyapmu}
f(X_t) - f_\star \leqslant e^{- 2 \mu t}(f(x_0) - f_\star).
\end{equation}
This convergence guarantee follows directly from the derivative with respect to time of the Lyapunov function $\mathcal{V}(X_t) = f(X_t) - f_\star$, together with strong convexity (or Łojasiewicz inequality): $\frac{d}{dt}\mathcal{V}(X_t) = \dot{X_t}^\top \nabla f(X_t) = -\|\nabla f(X_t)\|^2 \leqslant -2 \mu (f(X_t) - f_\star) = -2 \mu \mathcal{V}(X_t)$.

\bigbreak
Given the specific gradient flows studied in this work, it is reasonable to search for Lyapunov functions made of linear combinations of function values, and a quadratic form in the trajectory $X_t$. We simply refer to them as quadratic Lyapunov functions:
\begin{equation}
\label{lyapunov}
    \mathcal{V}_{a,c}(X_t) = a \cdot (f(X_t) - f_\star) + c \cdot \|X_t - x_\star\|^2,
\end{equation}
where $a, c \geqslant 0$ are constants that do not depend on $t$, such that the function $ \mathcal{V}_{a,c}$ is nonnegative and nonincreasing along the flow. Quadratic Lyapunov functions are common for proving convergence of gradient flows and cover for instance the Lyapunov function used to prove convergence of the gradient flow under strong convexity~\eqref{reflyapmu}. Given a Lyapunov function $\mathcal{V}_{a,c}$, the idea is to find the largest nonnegative value $\tau_\star$ such that the condition
\begin{equation}
\label{wc_mu}
    \frac{d}{dt}\mathcal{V}_{a,c}(X_t) \leqslant -\tau_\star \mathcal{V}_{a,c}(X_t),
\end{equation}
holds for any dimension $d \in \mathbf{N}$, any function $f \in \mathcal{F}_{\mu, \infty}$ and any trajectory $X_t$ generated by the gradient flow. After integration, \eqref{wc_mu} allows to obtain a convergence guarantee of the form $\mathcal{V}_{a,c}(X_t) \leqslant e^{-\tau_\star t} \mathcal{V}_{a,c}(X_0)$. Given a certain Lyapunov function $\mathcal{V}_{a,c}$ and a time $t$, we get that the largest acceptable $\tau_\star$ is a solution to:
\begin{equation}
\label{eq:str_pep}
    \begin{aligned}
      -\tau_\star = \max_{X_t \in \mathbf{R}^d, d \in \mathbf{N}, f\in \mathcal{F}_{\mu, \infty}} \ & \  \frac{d}{dt}\mathcal{V}_{a,c}(X_t), \\
      \text{subject to }
    \mathcal{V}_{a,c}(X_t) &= 1, \\
    \dot{X_t} &= - \nabla f (X_t).
\end{aligned}
\end{equation}
This minimization problem is invariant with respect to $t$. 
It is established in~\cite{taylor2016smooth, drori2012performance} that these so-called performance estimation problems (PEP) can be formulated as SDPs. Because of the variable $f \in \mathcal{F}_{\mu, \infty}$, the maximization problem~\eqref{eq:str_pep} is infinite-dimensional. Recall that a differentiable function $f \in \mathcal{F}_{\mu, \infty}$ verifies for all points $x,y \in \mathbf{R}^d$, $f(x) - f(y) - \langle \nabla f(y), x-y \rangle \geqslant \frac{\mu}{2} \|x-y\|^2$. Using alternate variables $f_t$, $f_\star$, $g_t$ and $g_\star$ (informally: $f_t = f(X_t)$, $f_\star = f(x_\star)$, $ g_t = \nabla f(X_t)$ and $g_\star = \nabla f(x_\star) = 0$), it holds that:
\begin{equation}
\label{eq:interpolation_step}
    \begin{aligned}
      -\tau_\star = \max_{X_t \in \mathbf{R}^d, d \in \mathbf{N}} \ & \ \frac{d}{dt}\mathcal{V}_{a, c}(X_t), \\
      \text{subject to } &\mathcal{V}_{a,c}(X_t) = 1, \\
    &\dot{X}_t = - g_t, \\
    & f_i - f_j -\langle g_j, X_i-X_j\rangle \geqslant \frac{\mu}{2}\|X_i-X_j\|^2, \ \forall i,j=t, \star.
\end{aligned}
\end{equation}
The fact that~\eqref{eq:interpolation_step} produces an upper bound on $-\tau_\star$ directly follows from the fact that any sampled strongly convex function satisfy these inequalities at the sampled points (here $X_t$ and $x_\star$). Thereby, any feasible point to~\eqref{eq:str_pep} corresponds to a feasible point for~\eqref{eq:interpolation_step} with the same objective value. In the other direction,~\cite[Corollary 2]{taylor2016smooth} (which provides a constructive way to obtain some $f \in \mathcal{F}_{\mu, \infty}$ that interpolates the triplets $(X_i, g_i, f_i)_{i=t, \star}$) ensures that any feasible point to~\eqref{eq:interpolation_step} can be translated to a feasible point to~\eqref{eq:str_pep} with also the same objective value, thereby reaching the equivalence between formulations~\eqref{eq:str_pep} and~\eqref{eq:interpolation_step}.
\smallbreak
In a second stage, we introduce $G  = \begin{pmatrix} \|X_t - x_\star\|^2 &  \langle X_t - x_\star, g_t \rangle  \\  \langle X_t-x_\star, g_t \rangle & \| g_t \|^2 \end{pmatrix} \succcurlyeq 0$, a Gram matrix and a vector $F = [f_t, f_\star]$, thereby obtaining a semidefinite reformulation:
\begin{equation}
\label{SDP}
    \begin{aligned}
      -\tau_\star = \max_{G\succcurlyeq 0, F \in \mathbf{R}^2} & b_0^\top F + {\rm Tr}(A_0G), \\
      \text{subject to }
    & b_1^\top F + {\rm Tr}(A_1G) \geqslant 0, \\
    & b_2^\top F + {\rm Tr}(A_2G) \geqslant 0, \\
    & b_3^\top F + {\rm Tr}(A_3G) = 1. \\
\end{aligned}
\end{equation}
where $A_0 = \begin{pmatrix} 0 & -c \\ -c & -a \end{pmatrix}$, $A_1 = \begin{pmatrix}-\mu/2 & 1/2 \\ 1/2 & 0 \end{pmatrix}$, $A_2= \begin{pmatrix}-\mu/2 & 0 \\ 0 & 0\end{pmatrix}$, $A_3 = \begin{pmatrix}c & 0 \\ 0 & 0 \end{pmatrix}$, $b_0 = [0, 0]^\top$, $b_1 = [-1,\ 1]^\top $, $b_2=[1, \ -1]^\top $ and $b_3 = a[1, \ -1]^\top $. Consider the corresponding Lagrangian, for $F \in \mathbf{R}^2$, $G \succcurlyeq 0$, $\tau \in \mathbf{R}$ and $\lambda_1, \lambda_2 \geqslant 0$: 
\begin{equation*}
    \begin{aligned}
    \mathcal{L}(F, G, \tau, \lambda_1, \lambda_2) =& b_0^\top F + {\rm Tr}(A_0G) + \tau \cdot (b_3^\top F + {\rm Tr}(A_3G) - 1) \\ 
    & + \lambda_1 \cdot(b_1^\top F + {\rm Tr}(A_1G)) + \lambda_2 \cdot(b_2^\top F + {\rm Tr}(A_2G)).
    \end{aligned}
\end{equation*}
The saddle point of the Lagrangian is given by:
\begin{equation*}
    \begin{aligned}
    \tau_\star &= \min_{\tau, \lambda_1 \geqslant 0, \lambda_2 \geqslant 0} \max_{F, G \succcurlyeq 0} \mathcal{L}(F, G, \tau, \lambda_1, \lambda_2).
\end{aligned}
\end{equation*}
The Lagrangian dual of the SDP~\eqref{SDP} is obtained by maximizing over $F \in \mathbf{R}^2$, $G \succcurlyeq 0$:
\begin{equation*}
    \begin{aligned}
    -\tau_\star =  \min_{\tau, \lambda_1, \lambda_2} & \ -\tau, \\
      \text{subject to } & S = A_0 + \lambda_1 A_1 + \lambda_2 A_2 + \tau A_3 \preccurlyeq 0 \\ 
    &  b_0 +  \lambda_1 b_1 +\lambda_2 b_2  + \tau b_3 = 0, \\
    &  \tau\in \mathbf{R}, \ \lambda_1, \lambda_2 \geqslant 0.
\end{aligned}
\end{equation*}
The equality with $\tau_\star$ comes from strong duality, that holds via Slater's conditions (it is relatively easy to show that there exists a Slater's point using the same construction as in~\cite[Theorem 6]{taylor2016smooth}). Finally, since any feasible $\tau$ is a lower bound for $\tau_\star$, those developments allow arriving to the equivalence between verifying a quadratic Lyapunov function and verifying the feasibility of an LMI.

\begin{theorem}
\label{conv_gf_mu}
Let $a,c, \tau \geqslant 0$ and $\mu > 0$. The following assertions are equivalent:
\begin{itemize}
    \item The inequality $\frac{d}{dt}\mathcal{V}_{a,c}(X_t) \leqslant -\tau \mathcal{V}_{a,c}(X_t)$, is satisfied for all dimension $d \in \mathbf{N}$, for all $f \in \mathcal{F}_{\mu, \infty}$ and all trajectory $X_t$ solutions to the gradient flow~\eqref{eq:gf_init}, where $\mathcal{V}_{a,c}$ is a quadratic Lyapunov function~\eqref{lyapunov}.
    \item There exist $\lambda_1, \lambda_2 \geqslant 0$ such that:
    \begin{equation}
\label{LMI}
S = \begin{pmatrix} \tau c - \frac{\mu}{2}(\lambda_1 + \lambda_2) & -c + \frac{\lambda_1}{2} \\ -c + \frac{\lambda_1}{2} & -a \end{pmatrix} \preccurlyeq 0, \ \tau a = \lambda_1 - \lambda_2.
\end{equation}
\end{itemize}
\end{theorem}

\begin{remark}
As a corollary of our result and for the class of quadratic Lyapunov functions~\eqref{lyapunov} (and later~\eqref{lyap_conv},~\eqref{lyapunov_order2}), it turns out only two interpolation inequalities in $(X_t, x_\star)$ and $(x_\star, X_t)$ are involved in convergence proofs for continuous-time models (see Theorem~\ref{conv_theorem_order2} for second-order gradient flows). In other words, the framework reveals shorter proofs in continuous-time.
\end{remark}

A few conclusions can be drawn from the LMI equivalence from Theorem~\ref{conv_gf_mu}. For fixed value of $a, c, \tau$, the LMI provides provides a necessary and sufficient condition for a quadratic Lyapunov function $\mathcal{V}_{a,c}$ to decrease at a specific rate $\tau \geqslant 0$ for all function in the class $ \mathcal{F}_{\mu, \infty}$. Second, we can simultaneously optimize over the class of quadratic Lyapunov functions and over the convergence rate. Indeed, given a rate~$\tau$, the LMI is jointly convex in $\lambda_1, \lambda_2, a, c$. Therefore, thanks to linearity of the feasibility problem in $\tau$, a bisection search allows to optimize over  it and to find the worst-case guarantee~$\tau_\star$.

\begin{figure}[!ht]
\begin{subfigure}[t]{0.47\textwidth}
  \centering
  \begin{tikzpicture}
\begin{loglogaxis}[legend pos=north west,legend style={draw=none},plotOptions1,width=1.0\linewidth,ylabel={Worst-case convergence rate $\tau$},xlabel={Strong convexity parameter $\mu$}]
\addplot[blue, thick] table [y=theorygf, x=condition]{gf.txt}; 
\addplot[dotted, red, very thick] table [y=gf, x=condition]{gf.txt};
\addlegendentry{$2 \mu$}
\addlegendentry{PEP}
\end{loglogaxis}
\end{tikzpicture}
  \caption{Worst-case rate $\tau_\star$ for the class of quadratic Lyapunov functions~\eqref{lyapunov}}.
  \label{fig:gf_rate}
\end{subfigure}
\hspace{.2cm}
\begin{subfigure}[t]{0.47\textwidth}
  \centering
  \begin{tikzpicture}
\begin{axis}[legend pos=north east,legend style={draw=none},plotOptions2,width=1.0\linewidth,ylabel={$f(X_t) - f_\star$},xlabel={$X_t - x_\star$}]
\addplot[blue, thick] table [y=functheorygf, x=points]{worst_function_gf.txt};
\addplot[red, dotted, thick] table [y=funcgf, x=points]{worst_function_gf.txt};
\addlegendentry{$\frac{\mu}{2}x^2$}
\addlegendentry{PEP}
\end{axis}
\end{tikzpicture}
  \caption{Reconstruction of a function $f \in \mathcal{F}_{\mu,\infty}$ that interpolates $x_\star$ and $X_t$, while matching the convergence rate $\tau_\star = 2 \mu$, with $\mu=0.1$.}
  \label{fig:GF_function}
  \end{subfigure}
\vspace{-1.em}
 \caption{Comparison between numerical values for $\tau$ obtained by solving the LMI~\eqref{LMI} and the reference established in the literature~\cite[Proposition 1.1]{scieur2017integration}, for trajectories~$X_t$ generated by gradient flow~\eqref{eq:gf_init} originating from a $\mu$-strongly convex function.}
\label{fig:examplesgf}
\vspace{-1.5em}
\end{figure}
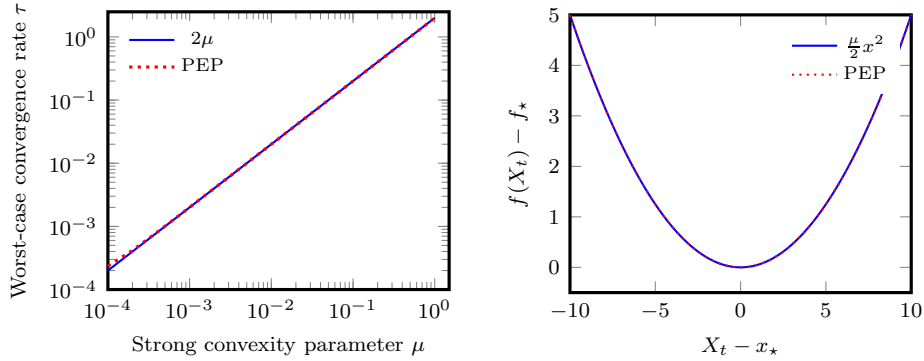


In Figure~\ref{fig:gf_rate}, we obtain the fastest linear convergence rate that can be achieved using quadratic Lyapunov functions~\eqref{lyapunov} (and even for all Lyapunov functions, since the rate is tight on a function $f(x) = \frac{\mu}{2}\|x\|_2^2$ for any time $t$). Together with Theorem~\ref{conv_gf_mu} we retrieve the known linear worst-case convergence speed in $e^{-2\mu}$ from Scieur et~al.~\cite[Proposition 1.1]{scieur2017integration} without improvement. The numerical approach allows to ensure tightness of the procedure by construction, as it guarantees the existence of a numerical function $f$ that exactly achieves this convergence guarantee (see Figure~\ref{fig:GF_function} and the method in~\cite[Chapter 3]{taylor2017thesis}). The next section builds on the same technique to analyze the gradient flow originating from a (possibly non strongly) convex function. In this scenario, the difficulty comes from the time-dependence of Lyapunov functions.

\subsubsection{Minimizing convex functions}

Let $f \in \mathcal{F}_{0, \infty}$ and $x_\star$ be a minimizer of $f$. In this case, worst-case convergence rates are often sublinear. Again, as in discrete-time, it is possible to obtain convergence guarantees using time-dependent quadratic Lyapunov functions. 
\smallbreak
The Lyapunov function $\mathcal{V}(X_t,t) = t(f(X_t) - f_\star) + \frac{1}{2}\|X_t - x_\star\|^2$ from~\cite[p. 7]{Su2015} verifies $\frac{d}{dt}\mathcal{V}(X_t,t) \leqslant 0$ for any dimension $d \in \mathbf{N}$, any function $f \in \mathcal{F}_{0, \infty}$, and any trajectory $X_t$ generated by the gradient flow~\eqref{eq:gf_init} (\textit{proof}: $\frac{d}{dt}\mathcal{V}(X_t) = t\langle \nabla f(X_t), \dot{X}_t\rangle + f(X_t) - f_\star + \langle \dot{X}_t, X_t - x_\star\rangle = -  t\| \nabla f(X_t)\|^2 + f(X_t) - f_\star - \langle \nabla f(X_t), X_t - x_\star\rangle \leqslant -  t\| \nabla f(X_t)\|^2$ using convexity). After integration between $0$ and $t$, we recover a convergence bound in function values from the literature~\cite[p.7]{Su2015},~\cite[Section 6.3.1]{fazlyab2018analysis}:
\begin{equation*}
    f(X_t) - f_\star \leqslant \frac{\|x_0 - x_\star\|^2}{2t}.
\end{equation*}
\smallbreak
Let us adapt the techniques from Section~\ref{sec:methodo} by considering quadratic Lyapunov functions:
\begin{equation}
\label{lyap_conv}
    \mathcal{V}_{a_t, c_t}(X_t,t) = a_t (f(X_t) - f_\star) + c_t \|X_t - x_\star\|^2,
\end{equation}
where $a_t, c_t \geqslant 0$ are functions differentiable with respect to time such that the function $\mathcal{V}_{a_t, c_t}$ is nonnegative and nonincreasing along the flow. When a quadratic Lyapunov function decreases along the trajectory $X_t$, that is $\frac{d}{dt}\mathcal{V}_{a_t, c_t}(X_t) \leqslant 0$, a convergence guarantee in function values is given by
\begin{equation*}
    f(X_t) - f_\star \leqslant \frac{\mathcal{V}_{a_0, c_0}(x_0,0)}{a_t}=\frac{a_0(f(x_0) - f_\star) + c_0\|x_0 - x_\star\|^2}{a_t}.
\end{equation*}
Verifying a quadratic Lyapunov function can be cast as verifying that the following maximization problem is nonpositive:
\begin{equation*}
    \begin{aligned}
      0 \geqslant \max_{X_t \in \mathbf{R}^d, d\in \mathbf{N},f\in \mathcal{F}_{0, \infty}} \ & \ \frac{d}{dt} \mathcal{V}_{a_t, c_t}\\
      \text{subject to }
    &\ \dot{X}_t = - \nabla f (X_t).
\end{aligned}
\end{equation*}

\begin{remark}
\label{remark_lyap}The strongly convex case as defined above is a particular case of the convex one, using a specific Lyapunov function $\Phi(\cdot)$, such that $\mathcal{V}(X_t,t) = e^{\tau t}\Phi(X_t)$ where $\Phi(X_t) = a\cdot (f(X_t) - f_\star) + c\cdot \|X_t - x_\star\|^2$. Then, $\frac{d}{dt}\mathcal{V}(X_t,t) \leqslant 0$ is equivalent to $\frac{d}{dt}\Phi(X_t) \leqslant -\tau \Phi(X_t)$.
\end{remark}
\smallbreak

\begin{theorem}
\label{conv_gf_conv} 
Let $a_t, c_t \geqslant 0$ be continuously differentiable with respect to time. The following assertions are equivalent:
\begin{itemize}
    \item The inequality
    $    \displaystyle \frac{d}{dt}\mathcal{V}_{a_t, c_t}(X_t,t) \leqslant 0
   $, is satisfied for all dimension $d \in \mathbf{N}$, all function $f \in \mathcal{F}_{0, \infty}$ and all trajectory $X_t$ generated by the gradient flow~\eqref{eq:gf_init}, where $\mathcal{V}_{a_t, c_t}$ is a quadratic Lyapunov function defined in~\eqref{lyap_conv}.
    \item There exist $\lambda^{(1)}_t, \lambda^{(2)}_t \geqslant 0$ such that:
\begin{equation*}
S = \begin{pmatrix} \dot{c}_t & -c_t + \frac{\lambda^{(1)}_t}{2} \\ -c_t + \frac{\lambda^{(1)}_t}{2} & -a_t \end{pmatrix} \preccurlyeq 0, \ \dot{a}_t = \lambda^{(1)}_t - \lambda^{(2)}_t.
\end{equation*}
\end{itemize}
\end{theorem}
\begin{proof}
The LMI is obtained following the previous methodology (Appendix~\ref{sec:A0_conv}).
\end{proof}
Choosing $\lambda^{(1)}_t=1$, $\lambda^{(2)}_t=0$, together with $c_t = \frac{1}{2}$ and $a_t = t$, the conditions in the LMI from Theorem~\ref{conv_gf_conv} are satisfied. We retrieve the Lyapunov function $\mathcal{V}(x,t) = t(f(x) - f_\star) + \frac{1}{2}\|x - x_\star\|^2$ from~\cite[p. 7]{Su2015}.

\smallbreak
Similar to Theorem~\ref{conv_gf_mu}, the LMI from Theorem~\ref{conv_gf_conv}, and hence the problem of looking for a Lyapunov function, is jointly convex in $\lambda^{(1)}_t$, $\lambda^{(2)}_t$, $c_t$, $a_t$, $\dot{a}_t$, $\dot{c}_t$. This Lyapunov analysis can also be validated numerically, as for the gradient flow originating from strongly convex functions.

\subsection{Accelerated gradient flows}

 A major improvement to gradient descent dates back to Nesterov~\cite{nesterov1983}, who introduced an accelerated gradient method (AGM): 
\begin{equation}
\begin{aligned}
    x_{k+1} &= y_{k} - \gamma \nabla f(y_{k}), \\
    y_{k+1} &= x_{k+1} + \alpha_k(x_{k+1} - x_{k}),
\end{aligned}
\label{eq:agm}
\end{equation}
where $\gamma, \alpha_k \geqslant 0$ depend on the class of functions to minimize. The combination of past iterates allows more control over the accumulated error. The idea of incorporating momentum was first introduced by Polyak~\cite{polyak1964} with the heavy-ball method, starting from~$x_0, x_1 \in \mathbf{R}^d$, and for a momentum $\alpha_k > 0$:
\begin{equation}
    x_{k+2} = x_{k+1} + \alpha_k(x_{k+1} - x_k) - \gamma \nabla f(x_{k+1}).
\label{eq:hbm}
\end{equation}
 Yet, compared to Nesterov's accelerated gradient method, the heavy-ball method lacks global acceleration beyond quadratics.
\smallbreak
When the step size $\gamma$ goes to zero, these schemes happen to be closely related to second-order differential equations, where $\beta_t \geqslant 0$ is a continuous function depending on $\alpha_k$:
\begin{equation*}
    \ddot{X}_t + \beta_t \dot{X} + \nabla f(X_t) = 0.
\end{equation*}
Recently, accelerated gradient methods have been analyzed using second-order differential equations~\cite{shi2018understanding, sanzserna2021connections, wilson2016lyapunov, hu2017dissipativity}. Reversely, the accelerated gradient method and the heavy-ball method may be seen as discretization schemes of these second-order ODEs, as many other schemes. Discretization techniques are, among others, discussed by~\cite{xu2018, shi2020learning, wilson2016lyapunov}. Taking integration theory's point of view, Scieur et~al.~\cite{scieur2017integration} proved that these multi-step methods may even be seen as discretization schemes of the gradient flow (for quadratics).
\smallbreak
Again, ODEs and multi-step first-order methods as defined above are often handled using quadratic Lyapunov functions. We extend the systematic Lyapunov approach developed previously to accelerated gradient flows. Let $\mathcal{V}_{a_t, P_t}$ be a quadratic Lyapunov function for second-order gradient flows, taken in the class of quadratic functions:
\begin{equation}
\label{lyapunov_order2}
    \mathcal{V}_{a_t, P_t}(X_t, t) = a_t( f(X_t) - f_\star) + \begin{pmatrix} X_t - X_\star\\ \dot{X}_t \end{pmatrix}^\top  \!\!\! \left(P_t \otimes I_d\right) \begin{pmatrix} X_t - X_\star\\ \dot{X}_t \end{pmatrix},
\end{equation}
where $P = \begin{pmatrix}p^{(11)}_t &  p^{(12)}_t \\ p^{(12)}_t &  p^{(22)}_t \end{pmatrix}, a_t \geqslant 0$ are continuously differentiable with respect to time and such that the Lyapunov function is nonnegative and nonincreasing along the flow. After integration of a quadratic Lyapunov function $\mathcal{V}_{a_t, c_t}$ between $0$ and $t$, this approach leads to convergence bounds for instance in function values $f(X_t) - f_\star \leqslant \frac{\mathcal{V}(x_0)}{a_t}$. 
\smallbreak

\subsubsection{Minimizing strongly convex functions}
Let $f \in \mathcal{F}_{\mu, L}$, with strong convexity parameter $\mu > 0$, and let Nesterov's accelerated gradient methods' parameters by defined by $\gamma = \frac{1}{L}$ and $\alpha= \frac{1 - \sqrt{\mu \gamma}}{1 + \sqrt{\mu \gamma}}$ in~\eqref{eq:agm}. When the step size $\gamma$ goes to zero, the continuous-time limit of $y_k$ in~\eqref{eq:agm} is exactly the Polyak damped oscillator~\cite{polyak1964}:
\begin{equation} 
\label{str_agf}
    \ddot{X_t} + 2\sqrt{\mu}\dot{X_t} + \nabla f(X_t) = 0,
\end{equation} 
as it has already been highlighted in previous works~\cite{fazlyab2018analysis, sanzserna2021connections, shi2018understanding}. The heavy-ball method~\eqref{eq:hbm} reaches the same continuous-time limit when reducing the step size. Shi et~al.~\cite{shi2018understanding} derived a convergence guarantee in $f(X_t) - f_\star = \mathcal{O}(e^{-\frac{\sqrt{\mu}t}{4}})$ using a Lyapunov-based approach. This bound was improved to $f(X_t) - f_\star = \mathcal{O}(e^{-\sqrt{\mu}t})$, by Wilson~et~al.~\cite[Appendix B]{wilson2016lyapunov} using the Bregman-Lagrangian approach, and by Sanz-Serna~and~Zygalakis using the IQC framework~\cite{sanzserna2021connections, fazlyab2018analysis}. Using the methodology from Theorem~\ref{conv_gf_mu}, we show that verifying linear convergence guarantees using quadratic Lyapunov functions with constant parameters~\eqref{lyapunov_order2} can be cast as an LMI.

\begin{theorem}
\label{impr_conv_agf_mu}
Let $\mu>0$ and $\tau \geqslant 0$. Let $a \geqslant 0$, and $P$ be a symmetric matrix. The following assertions are equivalent:
\begin{itemize}
    \item The inequality $\displaystyle\frac{d}{dt}\mathcal{V}_{a, P}(X_t) \leqslant -\tau \mathcal{V}_{a, P}(X_t)$, is satisfied for all dimension $d\in \mathbf{N}$, all function $f \in \mathcal{F}_{\mu, \infty}$ and all trajectory $X_t$ generated by the Polyak damped oscillator~\eqref{str_agf}, where $\mathcal{V}_{a, P}$ is a quadratic Lyapunov function~\eqref{lyapunov_order2}.
\item There exist $\lambda_1, \lambda_2, \nu_1, \nu_2 \geqslant 0$, such that:
\begin{equation}
\begin{aligned}
\label{LMI_acc}
 & \begin{pmatrix} -\frac{\mu}{2}(\lambda_1 + \lambda_2) + \tau p_{11} & p_{11} - 2\sqrt{\mu}p_{12} + \tau p_{12} & - p_{12} + \frac{\lambda_1}{2} \\ p_{11} - 2\sqrt{\mu}p_{12} + \tau p_{12} & 2(p_{12} - 2\sqrt{\mu}p_{22}) + \tau p_{22} & -p_{22} + \frac{a}{2} \\ - p_{12} + \frac{\lambda_1}{2} & -p_{22} + \frac{a}{2} & 0\end{pmatrix} \preccurlyeq  0, \\
        & \tau a = \lambda_1 - \lambda_2, \\
        & \begin{pmatrix} P & 0 \\ 0 & 0 \end{pmatrix} + \begin{pmatrix} \frac{\mu}{2}(\nu_1+\nu_2) & 0 & \frac{-\nu_1}{2} \\ 0 & 0 & 0 \\ \frac{-\nu_1}{2} & 0 & 0\end{pmatrix} \succcurlyeq 0 , \\
        & a = \nu_2 - \nu_1.
\end{aligned}
\end{equation}
\end{itemize}
\end{theorem}
\begin{proof}
The equivalence is obtained using the methodology developed in Section~\ref{sec:methodo} and introducing the Gram matrix $G = P^\top P$, where $P = (\dot{X}_t, X_t - x_\star, g_t)$, where $g_t$ holds for $\nabla f(X_t)$. The first LMI refers to the non-increasing condition for the Lyapunov function. The second one refers to the positivity constraint $\mathcal{V}_{a, P}(X_t) \geqslant 0 $, for all dimension $d\in \mathbf{N}$, all function $f \in \mathcal{F}_{0, \infty}$, and all trajectory $X_t$ generated by Polyak damped oscillator~\eqref{str_agf}, as done for discrete-time methods in~\cite[Th. 7]{taylor2018lyapunov}.
\end{proof}

As for the gradient flow, the LMI is jointly convex in $\lambda_1, \lambda_2, \nu_1, \nu_2 \geqslant 0$, in  the Lyapunov parameters $a, P$, and is linear in $\tau$. Hence, we can perform a bisection search over $\tau$ to find the fastest linear convergence rate that can be verified using quadratic Lyapunov functions, as done in Figure~\ref{fig:agf_rate}. The framework provides in addition a numerical tool for choosing Lyapunov parameters for which the worst-case linear convergence rate is achieved. Figure~\ref{fig:AGF_lyapunov} helped providing an intuition for parameters in Corollary~\ref{coro:str_agf}.

\begin{corollary}
\label{coro:str_agf}
Let $\mu \geqslant 0$. The function
\begin{equation*}
    \mathcal{V}(X_t) = f(X_t) - f_\star +  \begin{pmatrix} X_t  - X_\star\\ \dot{X_t} \end{pmatrix}^\top  \!\!\! \left(\begin{pmatrix} {4}/{9}\mu & {2}/{3}\sqrt{\mu} \\  {2}/{3}\sqrt{\mu} & {1}/{2} \end{pmatrix}\otimes I_d\right)\!\begin{pmatrix} X_t - X_\star\\ \dot{X_t} \end{pmatrix},
\end{equation*}
verifies $\frac{d}{dt}\mathcal{V}(X_t) \leqslant -{4}/{3}\sqrt{\mu} \mathcal{V}(X_t)$ for all dimension $d \in \mathbf{N}$, all function $f \in \mathcal{F}_{\mu, \infty}$, and all trajectory $X_t$ generated by the Polyak damped oscillator~\eqref{str_agf}. Tight rate is achieved for $f(x) = \frac{1}{2}\mu x^2$.
\end{corollary}
\begin{proof}
Taking $\lambda_1 = {4}/{3}\sqrt{\mu}$, $\lambda_2 = 0$, $\nu_1 = 0$ and $\nu_2 = 1$, we verify the LMI for this Lyapunov function $\mathcal{V}$, with $\tau = {4}/{3}\sqrt{\mu}$.
\end{proof}

This class of quadratic Lyapunov functions is inspired by~\cite{taylor2018lyapunov} in discrete-time, where a stricter positivity condition on $P \succcurlyeq 0$ hindered proving tight convergence of Nesterov's accelerated gradient. Similarly in our context, the Lyapunov function from Corollary~\ref{str_agf} is defined by $P=\begin{pmatrix} {4}/{9}\mu & {2}/{3}\sqrt{\mu} \\  {2}/{3}\sqrt{\mu} & {1}/{2} \end{pmatrix}$, which is not positive semidefinite. Usually in the continuous-time models' literature, we only consider matrices $P$ that are positive semidefinite, such as in the Lyapunov function from~\cite[Theorem 4.3]{sanzserna2021connections, shi2018understanding}: 
\begin{equation*}
    \mathcal{V}(X_t) = f(X_t) - f_\star + \frac{1}{2} \begin{pmatrix} X_t - X_\star\\ \dot{X_t} \end{pmatrix}^\top  \!\!\! \left(\begin{pmatrix} \mu & \sqrt{\mu} \\ \sqrt{\mu} & 1 \end{pmatrix}\otimes I_d\right)\!\begin{pmatrix} X_t - X_\star\\ \dot{X_t} \end{pmatrix}
\end{equation*}
that verifies $\displaystyle\frac{d}{dt}\mathcal{V}(X_t) \leqslant -\sqrt{\mu} \mathcal{V}(X_t)$ for all dimension $d \in \mathbf{N}$, all function $f \in \mathcal{F}_{\mu, \infty}$ and all trajectory $X_t$ generated by Polyak damped oscillator. This Lyapunov is a feasible point of the LMI~\eqref{LMI_acc} from Theorem~\ref{impr_conv_agf_mu}, with $\tau = \sqrt{\mu}$, $\lambda_1=\sqrt{\mu}$, $\lambda_2=0$, $\nu_1=0$, $\nu_2=a=1$. By relaxing the condition $P \succcurlyeq 0$, we thus improve results from Sanz-Serna~and~Zygalakis and Wilson~et~al. by a factor $4/3$ in Corollary~\ref{coro:str_agf}.

\begin{figure}[!ht]
\begin{subfigure}[t]{0.47\textwidth}
  \centering
  \begin{tikzpicture}
\begin{loglogaxis}[legend pos=south east,legend style={draw=none},plotOptions3,width=1.0\linewidth,ylabel={Worst-case $\tau$},xlabel={Strong convexity parameter $\mu$}]
\addplot[blue, thick] table [y=theoryagf, x=condition]{agf.txt}; 
\addplot[dotted, blue, very thick] table [y=agf, x=condition]{agf.txt};
\addplot[red, thick] table [y=theoryagf, x=condition]{relaxed_agf.txt}; 
\addplot[dotted, red, very thick] table [y=agf, x=condition]{relaxed_agf.txt};
\addlegendentry{$\sqrt{\mu}$ ($P \succcurlyeq 0$)}
\addlegendentry{PEP ($P \succcurlyeq 0$)}
\addlegendentry{$4/3\sqrt{\mu}$}
\addlegendentry{PEP}
\end{loglogaxis}
\end{tikzpicture}
  \caption{Best guarantees found within the class of quadratic Lyapunov functions~\eqref{lyapunov_order2}.}
  \label{fig:agf_rate}
\end{subfigure}
\hspace{0.5cm}
\begin{subfigure}[t]{0.47\textwidth}
  \centering
  \begin{tikzpicture}
\begin{loglogaxis}[legend pos=south east,legend style={draw=none},plotOptions4,width=1.0\linewidth,ylabel={Lyapunov parameters},xlabel={Strong convexity parameter $\mu$}]
\addplot[green, thick] table [y=p11, x=condition]{P_agf.txt};
\addplot[orange, thick] table [y=p01, x=condition]{P_agf.txt};
\addplot[orange, dotted, very thick] table [y=ref1, x=condition]{P_agf.txt};
\addplot[purple, thick] table [y=p00, x=condition]{P_agf.txt};
\addplot[purple, dotted, very thick] table [y=ref2, x=condition]{P_agf.txt};
\addlegendentry{PEP $p_{22}$}
\addlegendentry{PEP $p_{12}$}
\addlegendentry{$4/9\mu$}
\addlegendentry{PEP $p_{11}$}
\addlegendentry{$2/3\sqrt{\mu}$}
\end{loglogaxis}
\end{tikzpicture}
  \caption{Lyapunov parameters $P$ in~\eqref{lyapunov_order2} for $\tau = 4/3\sqrt{\mu}$ and $a=1$, as a function of the condition number $\mu$.}
  \label{fig:AGF_lyapunov}
  \end{subfigure}
\vspace{-1.5em}
 \caption{Comparison between the worst-case guarantee obtained numerically with PEP, and its references, for the Polyak damped oscillator~\eqref{str_agf} originating from $\mu$-strongly convex functions, and for quadratic Lyapunov functions~\eqref{lyapunov_order2}.}
\label{fig:examplesagf}
\vspace{-1.5em}
\end{figure}
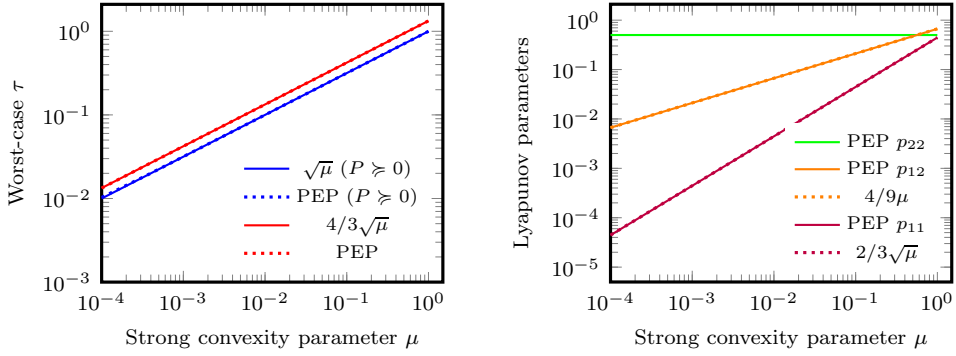
 Figure~\ref{fig:agf_rate} together with Corollary~\ref{coro:str_agf} allows to conclude that this bound cannot be improved when changing the Lyapunov function among the class of quadratic functions~\eqref{lyapunov_order2}.

\subsubsection{Minimizing convex functions}
As for the gradient flow, rates are sublinear when the accelerated gradient flow originates from convex functions. Let $f\in \mathcal{F}_{0, L}$, $\gamma \leqslant \frac{1}{L}$ be the step size, and $\alpha_k = \frac{k-1}{k+2}$ be the scheme parameter in Nesterov's accelerated method. Su et~al.~\cite[Section 2]{Su2015} proved the connection between the first-order scheme and a second-order ODE known as the accelerated gradient flow~(AGF):
\begin{equation}
    \ddot{X_t} + \frac{3}{t}\dot{X_t} + \nabla f(X_t) = 0.
\label{eq:nest_flow}
\end{equation}
Su et~al.~\cite[Theorem 3]{Su2015} proved that the following inequality is verified for all function $f\in\mathcal{F}_{0,\infty}$ and all trajectory $X_t$ generated by the accelerated gradient flow~\eqref{eq:nest_flow}:
\begin{equation*}
    f(X_t) - f_\star \leq 2 \frac{\|x_0 - x_\star\|^2}{t^2}.
\end{equation*}
Their proof exhibits a Lyapunov function $\mathcal{V}(X_t,t) = t^2(f(X_t) - f_\star) + 2\|(X_t-x_\star) + \frac{t}{2}\dot{X}_t\|^2$, which is decreasing along trajectories $X_t$ (\textit{proof: }$\frac{d}{dt}\mathcal{V}(X_t, t) = 2t(f(X_t) - f_\star) + t^2\langle \dot{X}_t, \nabla f(X_t)\rangle + 2\langle X_t - x_\star + \frac{t}{2}\dot{X}_t, 3\dot{X}_t + t\ddot{X}_t \rangle = 2t(f(X_t) - f_\star) - \langle \nabla f(X_t), X_t - x_\star\rangle \leqslant 0$ by convexity of $f$). In this context, we obtain again an LMI equivalence for verifying a quadratic Lyapunov function~\eqref{lyapunov_order2}.

\begin{theorem}
\label{conv_theorem_order2}
Let $a_t \geqslant 0$, $P_t \succcurlyeq 0$ be functions continuously differentiable with respect to time. The following assertions are equivalent:
\begin{itemize}
    \item The inequality $\displaystyle\frac{d}{dt}\mathcal{V}_{a_t, P_t}(X_t,t) \leqslant 0$, is satisfied for all dimension $d \in \mathbf{N}$, all function $f \in \mathcal{F}_{0, \infty}$ and all trajectory $X_t$ generated by the accelerated gradient flow~\eqref{eq:nest_flow}, where $\mathcal{V}_{a_t, P_t}$ is a quadratic Lyapunov function~\eqref{lyapunov_order2}.
    \item There exist $\lambda^{(1)}_t, \lambda^{(2)}_t \geqslant 0$ such that:
    \begin{equation*}
    \begin{aligned}
 S &= \begin{pmatrix} \dot{p}^{(11)}_t & p^{(11)}_t - \frac{3}{t}p^{(12)}_t + \dot{p}^{(12)}_t & - p^{(12)}_t + \frac{\lambda^{(1)}_t}{2} \\ p^{(11)}_t - \frac{3}{t}p^{(12)}_t + \dot{p}^{(12)}_t & 2(p^{(12)}_t - \frac{3}{t}p^{(22)}_t) + \dot{p}^{(22)}_t & -p^{(22)}_t + \frac{a_t}{2} \\ - p^{(12)}_t + \frac{\lambda^{(1)}_t}{2} & -p^{(22)}_t + \frac{a_t}{2} & 0\end{pmatrix} \preccurlyeq 0, \\
\dot{a}_t &= \lambda^{(1)}_t - \lambda^{(2)}_t.
    \end{aligned}
\end{equation*}
\end{itemize}
\end{theorem}
\begin{proof}
The proof follows those from Theorem~\ref{impr_conv_agf_mu}~and~\ref{conv_gf_mu}.
\end{proof}

 The Lyapunov function $\mathcal{V}(X_t,t) = t^2(f(X_t) - f_\star) + 2 \big\|(X_t-x_\star) + \frac{t}{2}\dot{X}\big\|^2$ exhibited by Su et al.~\cite[Theorem 3]{Su2015} is a feasible point of the LMI, with $a_t = t^2$ and $P_t = 2 \begin{pmatrix} 1 & t/2 \\ t/2 & t^2/4 \end{pmatrix}$ for the Lyapunov parameters, and $\lambda^{(1)}_t=t$, $\lambda^{(2)}_t=0$. It is possible to retrieve these results numerically, as it was done for the accelerated gradient flow originating from strongly convex functions.

\subsection{Higher-order convergence and time dilation}

In this section, we analyze convergence of the first and second-order non-autonomous gradient flows, and provide convergence guarantees depending on their parametrization. It appears that higher-order convergence of non-autonomous gradient flows is highly connected to time dilation. 

\subsubsection{A non-autonomous first-order gradient flow}

Let $f \in \mathcal{F}_{0, \infty}$. Let the non-autonomous first-order gradient flow be defined by:
\begin{equation}
    \label{alpha_gf}
    \dot{X}_t = - \alpha_t \nabla f (X_t), \ X_0 = x_0 \in \mathbf{R}^d,
\end{equation}
where $\alpha_t \geqslant 0$ is a continuous function (such that the flow is converging). It is natural to wonder if it is possible to accelerate such gradient flows by changing $\alpha_t$. A change of variable connects this ODE to the gradient flow~\eqref{eq:gf_init}, for which $\alpha_t=1$. Let $Y_t$ be the solution to the gradient flow, and $\tau_t = \int_0^t \alpha_sds$ be a time rescaling. Then, the variable  $X_t = Y_{\tau_t}$ verifies $\dot{X}_t = \frac{d}{dt}Y_{\tau_t} = \alpha_t \dot{Y}_{\tau_t} = -\alpha_t \nabla f(Y_{\tau_t}) = -\alpha_t \nabla f(X_t)$, which is exactly the non-autonomous gradient flow. The following corollaries can be obtained by performing this change of variable in Theorem~\ref{conv_gf_conv}.



\begin{corollary}
\label{alpha_coro_1}
Let $\mu \geqslant 0$.
\begin{itemize}
    \item If $\mu > 0$, the function $\mathcal{V}(X_t,t) = e^{2\mu\int_0^t  \alpha_s ds}(f(X_t)-f_\star)$ verifies $\displaystyle\frac{d}{dt}\mathcal{V}(X_t,t) \leqslant -2 \mu \alpha_t \mathcal{V}(X_t, t)$ for all dimension $d \in \mathbf{N}$, all function $f \in \mathcal{F}_{\mu, \infty}$ and all trajectory $X_t$ generated by the non-autonomous gradient flow~\eqref{alpha_gf}.
        \smallbreak
        A convergence guarantee is given by $\displaystyle f(X_t) - f_\star \leqslant e^{-2\mu\int_0^t  \alpha_s ds}(f(x_0) - f_\star)$.
    \item If $\mu=0$, the function $\mathcal{V}(X_t,t) = \left(\int_{0}^t\alpha_s ds\right)(f(X_t)-f_\star) + \frac{1}{2}\|X_t - x_\star\|^2$ verifies $\displaystyle\frac{d}{dt}\mathcal{V}(X_t,t) \leqslant 0$ for all dimension $d\in \mathbf{N}$, all function $f \in \mathcal{F}_{0, \infty}$ and all trajectory $X_t$ generated by the non-autonomous gradient flow~\eqref{alpha_gf}. 
    \smallbreak
    A convergence guarantee is given by $\displaystyle f(X_t) - f_\star \leqslant \frac{1}{2\int_0^t \alpha_s  ds}\|x_0 - x_\star\|^2$.
\end{itemize}
\end{corollary}

\begin{remark}
When $\alpha_t = 1$ above, that is $\tau_t=t$, we recover exactly the Lyapunov functions $\mathcal{V}(X_t,t) = t(f(X_t) - f_\star) + \frac{1}{2}\|X_t - x_\star \|^2$ from Theorem~\ref{conv_gf_conv} for convex functions, and $\mathcal{V}(X_t) = e^{2\mu t}(f(X_t) - f_\star)$ from Theorem~\ref{conv_gf_mu} for strongly convex functions.
\end{remark}

As mentioned by Orvieto and Lucchi~\cite{orvieto2020continuoustime} in the stochastic setting, and for accelerated methods by Wibisono et~al.~\cite{wibisono2016variational}, one can thus either work with $X_t$ generated by the non-autonomous gradient flow~\eqref{alpha_gf}, or with $Y_t$ generated by the gradient flow. However, acceleration on $X_t$ is not preserved after discretizing the flow. For example, applying explicit Euler scheme to the non-autonomous gradient flow~\eqref{alpha_gf} with $\alpha_s = \alpha > 0$ and originating from a function $f\in \mathcal{F}_{0, L}$, a condition on step sizes $h$ arises $ 0 \leqslant h \leqslant \frac{2}{L\alpha}$. 
 \smallbreak
When focusing on continuous-time models for analyzing explicit first-order methods, we usually prefer working with the gradient flow~\eqref{eq:gf_init}. However, the non-autonomous gradient flow~\eqref{alpha_gf} may be useful for analyzing other methods such as proximal methods. More generally, in the next section, we analyze  a family of  second-order gradient flows without adjusting the time scale (taking $\alpha_t =1$). 

\subsubsection{A non-autonomous second-order gradient flow}

Nesterov's accelerated gradient flow reaches an $\mathcal{O}(\frac{1}{t^2})$ convergence in function values (see Theorem~\ref{conv_theorem_order2}). We study convergence properties of a non-autonomous second-order gradient flow and compare them with those of the accelerated gradient flow, for the family of quadratic Lyapunov functions~\eqref{lyapunov_order2}. Let $\beta_t \geqslant 0$ be a continuous function, and a second-order non-autonomous gradient flow: 
\begin{equation}
\label{alphabeta_gf}
    \ddot{X}_t + \beta_t \dot{X}_t + \nabla f(X_t) = 0.
\end{equation}

\begin{remark}
Wibisono~et~al.~\cite[Theorem 2.2]{wibisono2016variational} proved that this ODE is related to the family of ODEs defined by $\dot{Y}_t + \Tilde{\beta}_tY_t + \Tilde{\alpha}_t\nabla f(Y_t)=0$. Let  $\alpha_t > 0$ be a continuously differentiable function with respect to time, $\tau_t = \int_0^t {\alpha_s}ds$ a time rescaling and $X_t$ a solution to~\eqref{alphabeta_gf}. The trajectory $Y_t = X_{\tau_t}$ is solution to $\ddot{Y}_t + (\beta_{\tau_t}\sqrt{\alpha_t} - \frac{\dot{\alpha}_t}{2\alpha_t})\dot{Y}_t + \alpha_t\nabla f(Y_t) = 0$. In contrast with $\alpha_t$ in~\eqref{alpha_gf}, note that changing $\beta_t$ in~\eqref{alphabeta_gf} does not correspond to a time rescaling.
\end{remark}

Theorem~\ref{acc_ode_conv} provides a systematic condition for the function $\mathcal{V}$ taken among the class of quadratic Lyapunov functions~\eqref{lyapunov_order2} to be a Lyapunov function for the second-order gradient flow~\eqref{alphabeta_gf}.

\begin{theorem}
\label{acc_ode_conv}
Let $a_t \geqslant 0$ and $P_t \succcurlyeq 0$ be continuously differentiable with respect to time, and $\mu \geqslant 0$. The following assertions are equivalent:
\begin{itemize}
    \item The inequality $\displaystyle\frac{d}{dt}\mathcal{V}_{a_t, P_t}(X_t,t) \leqslant 0$ is satisfied for all dimension $d \in \mathbf{N}$, all function $f \in \mathcal{F}_{\mu, \infty}$ and all trajectory $X_t$ generated by the non-autonomous second-order gradient flow~\eqref{alphabeta_gf}, where $\mathcal{V}_{a_t, P_t}$ is a quadratic Lyapunov function of the form~\eqref{lyapunov_order2}.
    \item There exist  $\lambda^{(1)}_t, \lambda^{(2)}_t \geqslant 0$ such that:
    \begin{equation}
    \label{eq:acc_LMI}
    \begin{aligned}
&\begin{pmatrix} -\frac{\mu}{2}(\lambda^{(1)}_t + \lambda^{(2)}_t) + \dot{p}_{t}^{(11)} & p^{(11)}_t - \beta_tp^{(12)}_t + \dot{p}^{(12)}_t & - p^{(12)}_t + \frac{\lambda_t^{(1)}}{2} \\ p^{(11)}_t - \beta_tp^{(12)}_t + \dot{p}^{(12)}_t & 2(p^{(12)}_t - \beta_tp^{(22)}_t) + \dot{p}^{(22)}_t & -p^{(22)}_t + \frac{a_t}{2} \\ - p^{(12)}_t + \frac{\lambda_t^{(1)}}{2} & -p^{(22)}_t + \frac{a_t}{2} & 0\end{pmatrix} \preccurlyeq 0, 
\\ &\dot{a}_t = \lambda^{(1)}_t - \lambda^{(2)}_t.
\end{aligned}
\end{equation}
\end{itemize}
\end{theorem}
The LMI~\eqref{eq:acc_LMI} from Theorem~\ref{acc_ode_conv} is parametrized by $\beta_t$. When $\beta_t = \frac{3}{t}$ and $\mu = 0$, we recover Theorem~\ref{conv_theorem_order2} for Nesterov's accelerated gradient flow.

\begin{corollary}
\label{coro_accparam}
Let $\mu \geqslant 0$. The function
\begin{equation*}
    \mathcal{V}(X_t, t) = a_t(f(X_t) - f_\star) + \frac{1}{2a_t}\|a_t \dot{X}_t + \dot{a}_t(X_t - x_\star)\|^2,
\end{equation*}
with $a_t$ defined by:
\begin{itemize}
    \item If $\mu > 0$, $a_t = e^{\tau t}$, with $\tau = \min(\sqrt{\mu}, \frac{2}{3}\beta_t)$,
    \item If $\mu=0$, $a_t = \min\left((\sqrt{a_0} + (\sqrt{p_0^{(11)}}/2)t)^2, \lim_{\epsilon \rightarrow 0, \\ \epsilon > 0}a_\epsilon e^{\int_\epsilon^t  \frac{2}{3}\beta_sds}\right)$,
\end{itemize}
verifies $\displaystyle\frac{d}{dt}\mathcal{V}(X_t,t) \leqslant 0$ for all dimension $d\in \mathbf{N}$, all function $f \in \mathcal{F}_{\mu, \infty}$ and all trajectory $X_t$ generated the second-order gradient flow~\eqref{alphabeta_gf}.
\end{corollary}
\begin{proof}
The proof follows from Theorem~\ref{acc_ode_conv} and is detailed in Appendix~\ref{sec:A00}.
\end{proof}

Given a convex function $f \in \mathcal{F}_{0, \infty}$ and a quadratic Lyapunov function, Corollary~\ref{coro_accparam} allows to conclude the following about the convergence of the second-order gradient flow~\eqref{alphabeta_gf}: given the class of quadratic Lyapunov functions~\eqref{lyapunov_order2}, it cannot converge faster in function values than Nesterov's accelerated gradient flow, i.e., not faster than $\mathcal{O}(\frac{1}{t^2})$.
\smallbreak
To analyze Nesterov's accelerated gradient methods using ODEs, Su et~al.~\cite{Su2015} introduced a parametrized second-order gradient flow, that fits the model~\eqref{alphabeta_gf}: 
\begin{equation}
\label{su_agf}
    \ddot{X}_t + \frac{r}{t}\dot{X}_t + \nabla f(X_t) = 0,
\end{equation}
where $r \geqslant 0$. When $r\geqslant 3$, the guarantee $f(X_t) - f_\star \leqslant \frac{(r-1)^2\|x_0 - x_\star\|^2}{2t^2}$ holds for any function $f \in \mathcal{F}_{0, \infty}$ and any trajectory $X_t$ generated by the accelerated gradient flow~\eqref{su_agf}~\cite[Theorem 5]{Su2015}. When $r<3$, Attouch et~al.~\cite[Theorem 2.1]{Attouch2019} proved a convergence bound in function values $f(X_t) - f_\star = \mathcal{O}(\frac{1}{t^{2r/3}})$. Using Corollary~\ref{coro_accparam}, we retrieve a similar bound in function values $f(X_t) - f_\star \leqslant \frac{2\|x_0 - x_\star\|^2r^2}{9t^{\frac{2r}{3}}}$.
\begin{remark}
Polynomial convergence can be achieved up to a time rescaling, as it was shown by Wisobono et~al.~\cite{wibisono2016variational}. Given $\tau_t = t^{p/2}$ ($\alpha_t = \frac{p}{2}t^{p/2 - 1}$), Nesterov's accelerated gradient flow ($r=3$) transforms into $\ddot{X}_t + \frac{p+1}{t}\dot{X}_t + \frac{p^2}{4}t^{p-2}\nabla f(X_t) = 0$ for $p \geqslant 2$. Corollary~\ref{coro_accparam} ensures convergence in function values $f(X_t) - f_\star \leqslant \frac{\|x_0 - x_\star\|^2}{2t^p}$, for all dimension $d \in \mathbf{N}$, all function $f \in \mathcal{F}_{0, \infty}$ and all trajectory $X_t$ generated by 
the second-order gradient flow~\eqref{alphabeta_gf}.
\end{remark}
\bigbreak
We have extended the performance estimation approach to continuous-time models using Lyapunov functions. Given a (possibly accelerated) gradient flow and a class of functions, we presented a semidefinite formulation equivalent with the existence of a certain type of quadratic Lyapunov function. The next section is devoted to the analysis of continuous-time models approximating SGD.

\section{SDEs for modeling SGD}
\label{sec:sde_for_sge}

 Convergence results for stochastic convex optimization often require additional assumptions on problem classes, refined choices of step sizes and averaged iterates. Their analyses raise more complex proofs in contrast with deterministic methods. Prior works have been concerned with connections between stochastic methods and stochastic differential equations (SDEs)~\cite{li2017stochastic,orvieto2020continuoustime,xu2018, shi2019acceleration, shi2018understanding}. This section is devoted to convergence analyses of SDEs approximating stochastic methods using a systematic Lyapunov approach. Verifying a small-sized LMI will be sufficient (but not necessary) for verifying a quadratic Lyapunov function.
\smallbreak
Stochastic gradient descent (SGD) is given by:
\begin{equation}
    x_{k+1} = x_k - \gamma\nabla \tilde{f}(x_k, \xi_{i_k}),
\end{equation} 
where $\gamma>0$ is the step size, $\xi_{i_k}$ are uniformly drawn in $(\xi_1, ..., \xi_n)$, and where $\nabla \tilde{f}(x_k, \xi_{i_k})$ is an unbiased estimate of full gradient $\nabla f(x_k)$. Li~et~al.~\cite{li2017stochastic} introduced stochastic modified equations (SMEs) to model SGD, rewriting it as
\begin{equation*}
   x_{k+1} = x_k - \gamma \nabla\tilde{f}(x_k, \xi_{i_k})+ \sqrt{\gamma}V_k(x_k),
\end{equation*}
where $V_k(x) = \sqrt{\gamma}\left(\nabla f(x_k) - \nabla \tilde{f}(x_k, \xi_{i_k})\right)$ has zero mean and a covariance matrix equal to $\gamma \Sigma(x_k) = \gamma (\sum_{i=1}^n (\nabla f(x_k) - \nabla \tilde{f}(x_k, \xi_{i_k}))(\nabla f(x_k) - \nabla \tilde{f}(x_k, \xi_{i_k}))^\top )$. 
\smallbreak
The corresponding SDE is given by:
\begin{equation}
\label{eq:sme}
    dX_t = -\nabla f(X_t)dt + (\gamma \Sigma(X_t))^{1/2}dB_t,
\end{equation}
where $B_t$ is a standard Brownian motion. The SDE~\eqref{eq:sme} is an (order-$1$ weak) approximation of  SGD~(\cite[Theorem 1]{li2017stochastic},~\cite{Li2019}), which allows to take into account the role of constant step size in the dynamics of SGD (while keeping them small). Under mild assumptions on $f$, Li~et~al.~\cite{Li2019} proved the weak approximation of SGD by this SDE on a finite interval $[0, T]$: there exists $C>0$ such that $\|\mathbf{E}[x_k] - \mathbf{E}[X(k\gamma)] \| \leqslant C\gamma$ for $k \in [0, \frac{T}{\gamma}]$. However, the approximation point of view from this approach is relatively limited since $C$ depends exponentially on~$T$. 

\begin{remark}
The SDE~\eqref{eq:sme} is an approximation of SGD for small step sizes~$\gamma \geqslant 0$. When the step size goes to zero, the noise term actually disappears, and the limiting ODE of SGD is exactly the gradient flow~\eqref{eq:gf_init}. In contrasts, the stochastic Langevin dynamics~\cite{langevin1908} $x_{k+1} = x_k - \gamma \nabla f(x_k) - \sqrt{\gamma}\xi_k$ has the limiting ODE $dX_t = -\nabla f(X_t)dt + \sqrt{2}dB_t$, where the step size is not taken into account.
\end{remark}

Compared to gradient descent, SGD does not converge to a stationary point under constant step sizes~\cite{bach2011, taylor2021stochastic}. Convergence to a stationary point requires diminishing step sizes such as $\gamma_k = \frac{1}{\sqrt{k}}$. Li~et~al.~\cite[Section 4.1]{li2017stochastic} (and later Orvieto~and~Lucchi~\cite[Section 2.1]{orvieto2020continuoustime}) proposed to include this varying learning rate in the dynamics:
\begin{equation*}
    x_{k+1} = x_k - \gamma h_k \nabla f(x_k),
\end{equation*}
where $\gamma$ is the maximum allowed learning rate and $h_k \in [0, 1]$ is the time-varying part. For $h_t \geqslant 0$ a continuous function playing the role of step sizes $h_k$, the SDE is:
\begin{equation}
\label{sme_general}
    dX_t = - h_t \nabla f(X_t)dt + h_t(\gamma \Sigma(X_t))^{1/2}dB_t.
\end{equation}
We treat the covariance matrix $\Sigma(X_t)$ as symmetric, already implied by the notation $\Sigma(X_t)^{1/2}$, but unstructured with bounded variance $\Sigma(X_t) \preccurlyeq \Sigma$ along any trajectory $X_t$ generated by the approximating SDE~\eqref{sme_general}. Compared to ODEs, functions $f \in \mathcal{F}_{0, L}$ to be optimized using SDEs are in addition assumed to be possibly $L$-smooth with $L\in (0, \infty]$ and to be twice continuously differentiable.
\smallbreak
In this section, we analyze approximating SDEs with averaging techniques, and include time-varying step sizes later. Verifying Lyapunov functions thanks to small-sized LMIs, we retrieve convergence results from discrete-time optimization methods, using appropriate choices of step sizes. 

\subsection{Lyapunov functions do not always extend to the stochastic setting}

The analysis of the gradient flow in the deterministic case provides Lyapunov functions that are decreasing along any trajectory generated by \eqref{eq:gf_init} (see Section~\ref{sec:deterministic}). The direct transfer of these Lyapunov functions to the stochastic setting is not always suited to the variance term, as detailed below. Under constant step sizes $\gamma > 0$ (and $h_t = 1$), an approximating SDE of SGD is:
\begin{equation*}
    dX_t = -\nabla f(X_t)dt + (\gamma \Sigma(X_t))^{1/2}dB_t.
\end{equation*}
\smallbreak
In SDE theory, a differential with respect to time of a function of a solution to a stochastic process is given by Ito's Lemma~\cite[Theorem 4.2]{sarkka_solin_2019}.
\begin{lemma}[Ito's Lemma]
Let $g$ be a twice continuously differentiable function, and $X_t$ be a stochastic process solution to the SDE~\eqref{eq:sme}, then
\begin{equation*}
    dg(X_t,t) = \frac{\partial}{\partial t}g(X_t, t)dt + \frac{\partial}{\partial x}g(X_t, t) dX_t + \frac{1}{2}\gamma {\rm Tr}(\frac{\partial^2}{\partial x^2}g(X_t, t)\Sigma(X_t))dt.
\end{equation*}
\end{lemma}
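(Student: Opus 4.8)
The plan is to derive the identity from a second-order Taylor expansion combined with the quadratic-variation rules of Itô calculus; this is the classical argument for Itô's Lemma specialized to the diffusion coefficient of \eqref{eq:sme}. First I would fix a time $t$ and a small increment $\Delta t$, and consider the discrete increment $\Delta g = g(X_{t+\Delta t}, t+\Delta t) - g(X_t, t)$. Expanding to first order in time and second order in space gives
\[
\Delta g = \frac{\partial g}{\partial t}\Delta t + \frac{\partial g}{\partial x}\Delta X_t + \tfrac{1}{2}\,\Delta X_t^\top \frac{\partial^2 g}{\partial x^2}\Delta X_t + o(\Delta t),
\]
where $\Delta X_t = X_{t+\Delta t} - X_t$ and all derivatives are evaluated at $(X_t, t)$. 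The point of departure from ordinary calculus is that the second-order term cannot be discarded, because the driving noise has nonvanishing quadratic variation.

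Next I would substitute the SDE increment $\Delta X_t \approx -\nabla f(X_t)\,\Delta t + (\gamma\Sigma(X_t))^{1/2}\Delta B_t$, with $\Delta B_t = B_{t+\Delta t} - B_t$, into the quadratic form. The purely deterministic contribution is of order $(\Delta t)^2$ and the mixed term is of order $\Delta t\,\Delta B_t$; both are negligible in the limit. The surviving piece is the Brownian quadratic form $\tfrac{1}{2}\,\Delta B_t^\top (\gamma\Sigma)^{1/2}\frac{\partial^2 g}{\partial x^2}(\gamma\Sigma)^{1/2}\Delta B_t$. Applying the Itô rule $\Delta B_t \Delta B_t^\top \to \Delta t\, I$ and the identity $\mathrm{Tr}((\gamma\Sigma)^{1/2} H (\gamma\Sigma)^{1/2}) = \gamma\,\mathrm{Tr}(H\Sigma)$ for $H = \frac{\partial^2 g}{\partial x^2}$ (using that $\Sigma(X_t)$ is symmetric, as assumed in the paper), this term converges to $\tfrac{1}{2}\gamma\,\mathrm{Tr}(\frac{\partial^2 g}{\partial x^2}\Sigma(X_t))\,\Delta t$. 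Summing the increments over a partition of $[0,t]$ and passing to the limit then yields the integral form of the statement, and reading off the differential gives exactly the claimed expression.

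The main obstacle is making rigorous the heuristic $\Delta B_t \Delta B_t^\top \to \Delta t\, I$, which is the heart of Itô calculus. Concretely, one partitions $[0,t]$ and shows that $\sum_i (\Delta B_{t_i})(\Delta B_{t_i})^\top$ converges in $L^2$ to $\int_0^t I\, ds$, exploiting independence of the Brownian increments together with the variance estimate $\mathrm{Var}(\|\Delta B\|^2) = O((\Delta t)^2)$. One must also justify replacing $\frac{\partial^2 g}{\partial x^2}(X_{t_i})$ by its limiting value along the sum, which uses continuity of the trajectory $X_t$ and local boundedness (uniform continuity on compacts) of the Hessian, and one must control the Taylor remainder uniformly. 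Finally, the first-order spatial term assembles into a genuine Itô stochastic integral $\int_0^t \frac{\partial g}{\partial x}(\gamma\Sigma)^{1/2}\,dB_s$, whose well-definedness requires the standard square-integrability condition on the integrand; verifying these technical conditions is precisely what the cited reference~\cite{sarkka_solin_2019} supplies, so in the paper it suffices to invoke the lemma in this form.
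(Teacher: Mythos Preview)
Your proposal correctly sketches the classical Taylor-expansion-plus-quadratic-variation argument for It\^o's Lemma, specialized to the diffusion coefficient $(\gamma\Sigma(X_t))^{1/2}$ of~\eqref{eq:sme}. The paper itself does not prove this statement at all: it is quoted verbatim as~\cite[Theorem 4.2]{sarkka_solin_2019} and used as a black box, so there is no ``paper's own proof'' to compare against. Your outline is precisely the standard proof one would find in that reference, and you have correctly identified both the key computation (the trace identity $\mathrm{Tr}((\gamma\Sigma)^{1/2}H(\gamma\Sigma)^{1/2})=\gamma\,\mathrm{Tr}(H\Sigma)$ via symmetry of $\Sigma$) and the technical obstacles (rigorous quadratic-variation limit, remainder control, square-integrability of the stochastic integrand).
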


\subsubsection{Minimizing strongly convex functions}

When the SDE originates from (possibly non-smooth) strongly convex functions $f\in \mathcal{F}_{\mu, \infty}$, the Lyapunov function from the deterministic setting extends well to SDEs. In the deterministic setting, we have shown in Theorem~\ref{conv_gf_mu} that the function $\mathcal{V}(x,t) = e^{2 \mu t}(f(x)-f_\star)$ is a Lyapunov function along the gradient flow.
Given $X_t$ a solution to the SDE~\eqref{eq:sme}, $\frac{d}{dt}\mathbf{E}\mathcal{V}(X_t,t) \leqslant \frac{1}{2}e^{2 \mu t} \gamma \mathbf{E}{\rm Tr}(\nabla^{2}_{xx} f(X_t)\Sigma(X_t))$ follows from Ito's formula. After integration between $0$ and $t$, we have:
\begin{equation*}
    \mathbf{E}(f(X_t)-f_\star) \leqslant e^{-2\mu t}\left( f(x_0) - f_\star  + \frac{1}{2} \gamma \int_0^t  e^{2\mu s} \mathbf{E}{\rm Tr}(\nabla^{2}_{xx} f(X_s)\Sigma(X_s))ds \right).
\end{equation*}

We cannot conclude about the convergence of the trajectory $X_t$ without additional requirements on the problem class. For example, let the smoothness parameter of $f$ be finite $L < \infty$ and recalling the bounded covariance assumption $\Sigma(X_t) \preccurlyeq \Sigma$, the variance term is bounded by $\frac{1}{2}L\gamma {\rm Tr}(\Sigma)$. Therefore, under constant step sizes, the SDE approximating SGD converges to a diffusion. In addition, it cannot get to a stationary point $x_\star$, because of the extra term, which depends linearly in the step size~$\gamma$.  As in the deterministic case, the forgetting of initial conditions remains of order~$\mathcal{O}(e^{-2\mu t})$. 

\subsubsection{Minimizing convex functions}
When $f \in \mathcal{F}_{0, \infty}$, Lyapunov functions induce convergence bounds with a possibly diverging variance term. When considering the deterministic gradient flow~\eqref{eq:gf_init} originating from functions $f \in \mathcal{F}_{0, \infty}$, the Lyapunov function $\mathcal{V}(x,t) = t(f(x)-f_\star) + \frac{1}{2}\|x - x_\star\|^2$ followed from Theorem~\ref{conv_gf_conv}. Let $f \in \mathcal{F}_{0, \infty}$ be a twice continuously differentiable function, $X_t$ be any trajectory solution to the SDE~\eqref{eq:sme}. Thanks to Ito's formula, we compute the derivative of $\mathcal{V}(\cdot)$ with respect to time as follows: $\frac{d}{dt}\mathbf{E}\mathcal{V}(X_t,t) \leqslant -t \mathbf{E}\|\nabla f(X_t)\|^2 + \mathbf{E}\frac{1}{2}{\rm Tr}((t\nabla_x^2f(X_t) + I)\Sigma(X_t))$. Using the bounded covariance assumption $\Sigma(X_t) \preccurlyeq \Sigma$ and assuming in addition $f$ to be $L$-smooth with $L < +\infty$, a convergence bound in function values is given by:
\begin{equation*}
   \mathbf{E} (f(X_t)-f_\star) \leqslant \frac{\|x_0 - x_\star\|^2}{t} +  \frac{1}{2}(L\frac{t}{2} + 1){\rm Tr}(\Sigma).
\end{equation*}
Such an inequality does not allows to conclude about convergence of the SDE~\eqref{sme_general} without further assumptions.
\smallbreak
In discrete-time, Taylor~and~Bach proved a comparable convergence bound for SGD~\cite[Theorem 5]{taylor2021stochastic}, applying the Lyapunov performance estimation approach under similar assumptions (bounded variance, smoothness of $f$). Optimization methods and alternative techniques have been developed to ensure the global convergence of SGD to the optimum, among them averaging~\cite{ruppert1988} and diminishing step sizes.

\subsection{Diminishing the step size is a key to success}

We study convergence of SDEs with time-varying step sizes~\eqref{sme_general}. In contrast to the deterministic setting, in which time-varying step sizes correspond to a time rescaling whose benefit disappears after discretization, such a time rescaling plays a direct role in the variance term (explicit formula by Orvieto~and~Lucchi in~\cite[Theorem 5]{orvieto2020continuoustime}). 
\smallbreak
Let $f \in \mathcal{F}_{0, \infty}$ be a twice continuously differentiable function, $X_t$ be generated by~\eqref{sme_general}, and $\mathcal{V}$ be a function. Our goal is to control the maximization problem
\begin{equation*}
    \begin{aligned}
     \max_{X_t \in \mathbf{R}^d,  d\in \mathbf{N}, \ f\in \mathcal{F}_{0, \infty}} \ & \ \frac{d}{dt}\mathbf{E}\mathcal{V}(X_t,t),\\
      \text{subject to }
    \ dX_t &= -h_t\nabla f(X_t)dt + h_t(\gamma \Sigma(X_t))^{1/2} dB_t,
\end{aligned}
\end{equation*}
where $\frac{d}{dt}\mathbf{E}\mathcal{V}(X_t,t) = \mathbf{E}[\frac{\partial}{\partial t}\mathcal{V}(X_t, t)  + \frac{\partial}{\partial x}\mathcal{V}(X_t, t)\frac{dX_t}{dt}]+ \frac{\gamma}{2} \mathbf{E}{\rm Tr}(\frac{\partial^2}{\partial x^2}\mathcal{V}(X_t, t)\Sigma(X_t))$ is computed using Ito's formula. The first two terms correspond exactly to taking the derivative in trajectories generated by ODEs~\eqref{alpha_gf} (or the SDEs~\eqref{sme_general} with $\gamma=0$), and the last term corresponds to a variance term. Because of the trace in a second-order derivative of $f$ and in the covariance matrix $\Sigma(X_t)$, we do not take this term into account in an LMI formulation. Instead, we propose to first derive a family of Lyapunov functions for associated ODEs using LMIs (deterministic setting), and then to optimize their parameters so that the variance term converges conveniently. In other words, the following functions $\mathcal{V}$ are Lyapunov functions for induced ODEs ($\gamma = 0$), but are not Lyapunov functions for SDEs under consideration. This approach allows a systematic computation of such quadratic functions and leads to convergence guarantees.

\begin{corollary}
\label{pr_step_size_tr}
Let $f \in \mathcal{F}_{0, \infty}$ be a twice continuously differentiable function, and $X_t \in \mathbf{R}^d$ be generated by the SDE~\eqref{sme_general}. The quadratic function 
\begin{equation*}
    \mathcal{V}(X_t, t) = a^{(1)}_t(f(X_t) - f_\star) + \frac{1}{2}\|X_t - x_\star\|^2,
\end{equation*}
with $\dot{a}_t^{(1)} = 2 h_t$ verifies $\displaystyle\frac{d}{dt}\mathbf{E}(\mathcal{V}(X_t,t) \leqslant h_t^2\mathbf{E}{\rm Tr}((\nabla_{xx}^2f(X_t) a^{(1)}_t + \frac{1}{2}I_d)\Sigma(X_t))$. 

Furthermore, it holds that: 

$\displaystyle \mathbf{E}[f(X_t) - f_\star] \leqslant \frac{\|x_0 - x_\star \|^2}{a^{(1)}_t} + \frac{\gamma}{2a^{(1)}_t}\int_0^t  h_s^2\mathbf{E}{\rm Tr}((\nabla_{xx}^2f(X_s) a^{(1)}_s + \frac{1}{2}I_d)\Sigma(X_s))ds$.
\end{corollary}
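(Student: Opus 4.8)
The plan is to run Ito's Lemma on $\mathcal{V}(X_t,t)$ along the diffusion~\eqref{sme_general} and to split the resulting drift into a \emph{deterministic} part, which the earlier ODE analysis already controls, and an \emph{Ito correction} (the variance term), which the LMI machinery cannot see and which must be handled separately. First I would record the partial derivatives $\frac{\partial}{\partial t}\mathcal{V} = \dot a^{(1)}_t(f(X_t)-f_\star)$, $\frac{\partial}{\partial x}\mathcal{V} = a^{(1)}_t\nabla f(X_t) + (X_t - x_\star)$, and $\frac{\partial^2}{\partial x^2}\mathcal{V} = a^{(1)}_t\nabla_{xx}^2 f(X_t) + I_d$. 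Since the diffusion coefficient of~\eqref{sme_general} is $h_t(\gamma\Sigma(X_t))^{1/2}$ and its drift is $-h_t\nabla f(X_t)$, the Ito correction carries an extra factor $h_t^2$ relative to the form of the Lemma stated for~\eqref{eq:sme}.

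Taking expectations annihilates the stochastic (martingale) integral — which needs only mild integrability, obtained by a standard localization argument together with twice-differentiability and the bound $\Sigma(X_t)\preceq\Sigma$ — so that
\begin{equation*}
\begin{aligned}
\frac{d}{dt}\mathbf{E}\mathcal{V}(X_t,t) = {}& \mathbf{E}\big[\dot a^{(1)}_t(f(X_t)-f_\star) - h_t a^{(1)}_t\|\nabla f(X_t)\|^2 - h_t\langle X_t - x_\star, \nabla f(X_t)\rangle\big] \\
& + \tfrac{\gamma}{2}h_t^2\,\mathbf{E}\,{\rm Tr}\big((a^{(1)}_t\nabla_{xx}^2 f(X_t) + I_d)\Sigma(X_t)\big).
\end{aligned}
\end{equation*}
The bracketed expression is precisely the time-derivative of the deterministic Lyapunov function of Corollary~\ref{alpha_coro_1} (case $\mu=0$, with $\alpha_t=h_t$ and $a^{(1)}_t=\int_0^t\alpha_s\,ds$): with $\dot a^{(1)}_t$ tied to $h_t$ as prescribed, the only inequality required is convexity in the form $f(X_t)-f_\star \leqslant \langle \nabla f(X_t), X_t - x_\star\rangle$, which cancels the first-order terms and leaves $-h_t a^{(1)}_t\|\nabla f(X_t)\|^2 \leqslant 0$. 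This is the single place where the earlier SDP/LMI analysis is reused, and it reduces the derivative to exactly the announced variance term.

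It then remains to integrate the expected derivative from $0$ to $t$. Using that $\mathcal{V}(X_t,t) \geqslant a^{(1)}_t(f(X_t)-f_\star)$ because the quadratic part is nonnegative, and dividing by $a^{(1)}_t$, I obtain
\begin{equation*}
\mathbf{E}[f(X_t)-f_\star] \leqslant \frac{\mathcal{V}(X_0,0)}{a^{(1)}_t} + \frac{\gamma}{2a^{(1)}_t}\int_0^t h_s^2\,\mathbf{E}\,{\rm Tr}\big((a^{(1)}_s\nabla_{xx}^2 f(X_s) + I_d)\Sigma(X_s)\big)\,ds,
\end{equation*}
which is the claimed bound (the initialization $\mathcal{V}(X_0,0)$ reducing to $\tfrac12\|X_0-x_\star\|^2$ when $a^{(1)}_0=0$).

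The conceptual obstacle — and the reason this is stated as a corollary rather than read off from a single LMI — is that the Ito correction cannot be encoded in the Gram-matrix/PEP reformulation: it is a genuine matrix trace coupling the Hessian $\nabla_{xx}^2 f$ with the covariance $\Sigma(X_t)$, neither of which appears among the PEP variables $(X_t,x_\star,\nabla f(X_t))$ that enter the LMI. Consequently the LMI is exploited only to drive the first-order drift to be nonpositive, while the variance term must be retained and bounded by hand, which is exactly why twice-differentiability and the uniform control $\Sigma(X_t)\preceq\Sigma$ are imposed. The usefulness of the corollary then rests entirely on choosing $a^{(1)}_t$ (equivalently $h_t$) so that $\frac{1}{a^{(1)}_t}\int_0^t h_s^2(\cdots)\,ds$ stays small, i.e.\ the forgetting-versus-variance trade-off discussed in the surrounding text.
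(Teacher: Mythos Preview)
Your proposal is correct and follows exactly the paper's own route: the paper's proof says only that ``the Lyapunov function $\mathcal{V}$ directly comes from Corollary~\ref{alpha_coro_1}'' and that ``the bound on $\mathbf{E}[f(X_t)-f_\star]$ is obtained using Ito's formula on $\mathcal{V}$ along trajectories $X_t$ generated by approximating SDEs~\eqref{sme_general}'', and you have simply unpacked those two steps in detail (Ito on $\mathcal{V}$, deterministic drift controlled by the convexity inequality underlying Corollary~\ref{alpha_coro_1}, variance term left over, then integrate). Your added commentary on why the Ito correction falls outside the Gram/LMI framework is accurate and matches the paper's motivation for treating it separately; note only that the relation you actually use, $\dot a^{(1)}_t=\alpha_t=h_t$ from Corollary~\ref{alpha_coro_1}, is the one that makes the convexity cancellation work cleanly, so the ``$2h_t$'' in the statement should be read up to the paper's own constant inconsistencies.
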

\begin{proof}
The function $\mathcal{V}$ is obtained using Corollary~\ref{alpha_coro_1}, and is a Lyapunov function for a non-autonomous first-order gradient flow. The bound for $\mathbf{E}[f(X_t) - f_\star]$ is derived using Ito's formula on $\mathcal{V}$ along trajectories $X_t$ generated by the SDE~\eqref{sme_general}.
\end{proof}
The convergence bound from Corollary~\ref{pr_step_size_tr} is divided in two terms: a term that forgets the initial conditions and a variance term due to noise. Convergence is mostly controlled by the step size ($\dot{a}^{(1)}_t = 2 h_t$). Bach~and~Moulines~\cite[Theorem 5]{bach2011} provided a comparable but much more complex analysis for stochastic gradient descent, for a specific family of step sizes. To compare to our results, let us consider step sizes defined by $h_t = \frac{1}{(t+1)^{\alpha}}$, where $\alpha \geqslant 0$. A possible convergence guarantee arises from Corollary~\ref{pr_step_size_tr} with $a^{(1)}_t = (t+1)^{1-\alpha}$. The forgetting of the initial condition is thus bounded by $\frac{\|x_0 - x_\star \|^2}{(t+1)^{1-\alpha}}$. Provided $\Sigma(X_t) \preccurlyeq \Sigma$, the variance term is bounded by:
\begin{align*}
\left\{
   \begin{array}{ll}
      \frac{\gamma {\rm Tr}(\Sigma)}{(t+1)^{1 - \alpha}} \left( L \frac{(t+1)^{2 - 3\alpha} - 1}{2 - 3 \alpha}  + \frac{1}{2} \frac{(t+1)^{1 - 2\alpha} - 1}{1-2\alpha }\right) & \mbox{if } 0\leqslant \alpha < 1, \\
     \frac{\gamma {\rm Tr}(\Sigma)}{\text{log}(t)}\left(L\int_0^t \frac{\text{log}(s+1)}{(s+1)^2}ds + \frac{1}{2}(1 - \frac{1}{t+1})\right)& \mbox{if } \alpha = 1.
        \end{array}\right.
\end{align*}

The variance term converges to zero if and only if $\alpha \geqslant \frac{1}{2}$. In other words, convergence is not guaranteed for constant step sizes ($\alpha=0$). For $\alpha \in (1/2, 2/3)$, the convergence in function value is bounded by $\mathcal{O}(\frac{1}{t^{2\alpha - 1}})$. For $\alpha \in (2/3, 1)$, the convergence in function value is bounded by $\mathcal{O}(\frac{1}{t^{1-\alpha}})$. As for SGD~\cite[Theorem 5]{bach2011}, the convergence regime changes at $\alpha = \frac{2}{3}$ with a global convergence rate in $\frac{1}{t^{1/3}}$, for which the variance term and the term that forgets the initial conditions converge at the same rate (up to $\text{log}(t)$). It is therefore possible to reach convergence with diminishing step sizes. Other techniques, such as averaging, have been developed to improve the trade-off between faster convergence and larger step sizes.

\subsection{Averaging for larger step sizes}
Polyak-Ruppert averaging~\cite{ruppert1988,polyak1992} is a standard way to improve convergence of SGD. In the discrete-time setting, convergence guarantees are considered at an averaged sequence, where $i_k$ is drawn uniformly in $[1, ..., n]$ and $n$ the sample size:
\begin{equation}
\label{eq:pr_avg}
\begin{aligned}
    x_{k+1} &= x_k - \gamma h_k\nabla f_{i_k}(x_k), \\
    \bar{x}_k &= \frac{1}{k}\sum_{i=1}^kx_i.
\end{aligned}
\end{equation} 
Other averaging techniques were developed later, such as primal averaging~\cite{tao2018} and averaging with respect to some nonnegative function~\cite{leroux2019}. 

\subsubsection{Polyak-Ruppert averaging}
In this section, we analyze convergence properties of an SDE~\eqref{sme_general} approximating SGD with time-varying step sizes under Polyak-Ruppert averaging. Taylor~and~Bach~\cite[Theorem 6]{taylor2021stochastic} provided a systematic design of Lyapunov functions for~\eqref{eq:pr_avg}, and a condition on the step size for convergence to the optimum. An approximating SDE for Polyak-Ruppert averaging is given by:
\begin{equation}
\label{ODE_mean1}
    \begin{aligned}
        dX_t &= -h_t\nabla f(X_t)dt + h_t(\gamma \Sigma(X_t))^{1/2} dB_t, \\
        d\bar{X}_t &= \frac{X_t - \bar{X}_t}{t}dt,
        \end{aligned}
    \end{equation}
with step size $\gamma>0$ that is taken close to zero, and a variable term $h_t\in [0, 1]$. We introduce the family of quadratic functions taking the averaged sequence into account:
\begin{equation}
\label{lyap_avg}
    \mathcal{V}_{a^{(1)}_t, a^{(2)}_t, P_t}(X_t, \bar{X}_t, t) = \begin{pmatrix}a^{(1)}_t \\ a^{(2)}_t \end{pmatrix}^\top \!\!\!\begin{pmatrix} f(X_t) - f_\star \\ f(\bar{X_t}) - f_\star \end{pmatrix} + \begin{pmatrix} X_t - X_\star\\ \bar{X_t} - X_\star\end{pmatrix}^\top \!\!\! \left(P_t\otimes I_d\right) \! \begin{pmatrix} X_t - X_\star\\ \bar{X_t} - X_\star\end{pmatrix},
\end{equation}
where $a^{(1)}_t, a^{(2)}_t \geqslant 0$ and $P_t = \begin{pmatrix}p^{(11)}_t & p^{(12)}_t \\ p^{(12)}_t & p^{(22)}_t \end{pmatrix}\succcurlyeq 0$ are continuously differentiable with respect to time. Given a quadratic function $ \mathcal{V}_{a^{(1)}_t, a^{(2)}_t, P_t}$ and an SDE~\eqref{ODE_mean1} under Polyak-Ruppert averaging, we present in Theorem~\ref{pr_avg_conv} a  way to control the quantity:
\begin{equation*}
    \begin{aligned}
      \max_{f \in \mathcal{F}_{0, \infty}, d \in \mathbf{N}, X_t \in \mathbf{R}^d} \ & \ \frac{d}{dt}\mathbf{E}\mathcal{V}_{a^{(1)}_t, a^{(2)}_t, P_t}(X_t, \bar{X}_t, t), \\
       \text{subject to} \ & dX_t = -h_t\nabla f(X_t)dt + h_t(\gamma \Sigma(X_t))^{1/2} dB_t, \\
        & d\bar{X}_t = \frac{X_t - \bar{X}_t}{t}dt.
    \end{aligned}
\end{equation*}

\begin{theorem}
\label{pr_avg_conv}
Let $h_t \geqslant 0$, and $a_t \geqslant 0$, $P_t \succcurlyeq 0$ be continuously differentiable with respect to time. Let  $f \in \mathcal{F}_{0, \infty}$ be a twice continuously differentiable function and $(X_t, \bar{X}_t) \in \mathbf{R}^d \times \mathbf{R}^d$ be a trajectory generated by the SDE under Polyak-Ruppert averaging~\eqref{ODE_mean1}. Let $\mathcal{V}$ be a quadratic function~\eqref{lyap_avg}, such that there exist $\lambda^{(1)}_t, ..., \lambda^{(6)}_t \geqslant 0$ verifying:
\begin{equation*}
    \begin{aligned}
&\begin{pmatrix} \dot{p}^{(11)}_t + \frac{2p^{(12)}_t}{t} & \dot{p}^{(12)}_t + \frac{p^{(22)}_t - p^{(12)}_t}{t}& \frac{\lambda^{(6)}_t + \lambda^{(4)}_t - 2h_tp^{(11)}_t}{2} & \frac{a^{(2)}_t}{2t} - \frac{\lambda^{(5)}_t}{2}\\ 

\dot{p}^{(12)}_t + \frac{p^{(22)}_t - p^{(12)}_t}{t} & \dot{p}^{(22)}_t - \frac{2p^{(22)}_t}{t} & -\frac{\lambda^{(6)}_t - 2h_tp^{(12)}_t}{2} & -\frac{a^{(2)}_t}{2t} + \frac{\lambda^{(5)}_t + \lambda^{(3)}_t}{2}\\ 

 \frac{\lambda^{(6)}_t + \lambda^{(4)}_t - 2h_tp^{(11)}_t}{2} & -\frac{\lambda^{(6)}_t - 2h_tp^{(12)}_t}{2} & -a^{(1)}_t & 0 \\ 

\frac{a^{(2)}_t}{2t} - \frac{\lambda^{(5)}_t}{2} & -\frac{a^{(2)}_t}{2t} + \frac{\lambda^{(5)}_t + \lambda^{(3)}_t}{2} & 0 &  0\end{pmatrix} \preccurlyeq 0, \\ 
&\dot{a}^{(1)}_t + \lambda^{(1)}_t + \lambda^{(5)}_t = \lambda^{(4)}_t + \lambda^{(6)}_t, \\ 
&\dot{a}^{(2)}_t + \lambda^{(2)}_t + \lambda^{(6)}_t = \lambda^{(3)}_t + \lambda^{(5)}_t + \dot{a}^{(1)}_t. 
\end{aligned}
\end{equation*}
Then, the following inequality is satisfied:
\smallbreak 
$\displaystyle\frac{d}{dt}\mathbf{E}\mathcal{V}(X_t, \bar{X}_t, t)\leqslant \frac{1}{2}\mathbf{E}{\rm Tr}\left((a^{(1)}_t\nabla_{xx} f(X_t) + 2p^{(11)}_t I_d)\Sigma(X_t)\right)h_t^2\gamma$.
\end{theorem}
\begin{proof}
The proof follows the method from Section~\ref{sec:methodo} (see Appendix~\ref{sec:A11}).
\end{proof}

The variance term $\frac{1}{2}\mathbf{E}{\rm Tr}((a^{(1)}_t\nabla_{xx} f(X_t) + 2p^{(11)}_t I_d)\Sigma(X_t))h_t^2\gamma$ from Theorem~\ref{pr_avg_conv} increases with $a^{(1)}_t$, and its convergence requires additional assumptions on $f$, for example smoothness. For this reason, we propose to analyze convergence guarantees based on functions $\mathcal{V}_{0, a^{(2)}_t, P_t}$, on the averaged sequence only.

\begin{corollary}[Averaging and diminishing step sizes]
\label{pr_step_size_tr_avg}
Let $h_t \geqslant 0$ and $a_t^{(2)} \geqslant 0$ be continuously differentiable. Let $f \in \mathcal{F}_{0, \infty}$ be a twice continuously differentiable function and $(X_t, \bar{X}_t)$ be a trajectory generated by the SDE under Polyak-Ruppert averaging~\eqref{ODE_mean1}. Assuming that $\dot{a}^{(2)}_t \leqslant \frac{a^{(2)}_t}{t}$ and $t \rightarrow \frac{a^{(2)}_t}{th_t} $ is a non-increasing function, the function
\begin{equation*}
    \mathcal{V}(X_t, \bar{X}_t, t) = a^{(2)}_t(f(\bar{X_t}) - f_\star) + \frac{a^{(2)}_t}{2 t h_t}\|X_t - X_\star\|^2
\end{equation*} verifies $\frac{d}{dt}\mathbf{E}\mathcal{V}(X_t, \bar{X}_t,t) \leqslant\frac{a^{(2)}_t}{t}h_t\mathbf{E}{\rm Tr}( \Sigma(X_t))$. Furthermore, it holds that $\displaystyle \mathbf{E}[f(\bar{X}_t) - f_\star] \leqslant \frac{\|x_0 - x_\star \|^2}{2 a^{(2)}_t} + \frac{\gamma}{2a^{(2)}_t}\int_0^t  \frac{a^{(2)}_s}{s}h_s\mathbf{E}{\rm Tr}( \Sigma(X_s))ds$.
\end{corollary}
\begin{proof}
The proof follows from the LMI in Theorem~\ref{pr_avg_conv} with the choices $\lambda_t^{(3)} =\lambda_t^{(6)} = \lambda_t^{(2)} =0$, 
$\lambda_t^{(5)} = \frac{a_t^{(2)}}{t}$, 
$\lambda_t^{(4)} = \lambda_t^{(1)} =  h_tp_t^{(11)}$, $\dot{a}_t^{(2)} = \frac{a_t^{(2)}}{t}$, $\dot{p}_{t}^{(11)} = \dot{(\frac{a_t^{(2)}}{th_t})}$, $\dot{p}_{t}^{(12)}= \dot{p}_{t}^{(22)}=0$.
\end{proof}

When $a^{(2)}_t = t$ (its maximal possible value), the step size verifies $\dot{h}_t \leqslant 0$. The variance term does not diverge if and only if $h_t$ is constant. Recalling the unbounded covariance $\Sigma_t \preccurlyeq \Sigma$, a convergence bound is given by $\mathbf{E}[f(\bar{X}_t) - f_\star] \leqslant \frac{\|x_0 - x_\star \|^2}{2t} + \frac{1}{2} {\rm Tr}(\Sigma)\gamma h$. The decreasing condition on $t \rightarrow \frac{a^{(2)}_t}{th_t}$ suggests a trade-off between convergence and diminishing step size, as obtained without averaging. 
\smallbreak
Under the assumptions of Corollary~\ref{pr_step_size_tr_avg}, recalling that $\Sigma(X_t) \preccurlyeq \Sigma$ is a bounded covariance matrix, let $h_t = \frac{1}{(t+1)^\alpha}$ be the step size and $a_t^{(2)} = t^\beta$, where $\alpha \geqslant 0$ and $ 0 \leqslant \beta \leqslant 1$. The decreasing condition imposes $\alpha + \beta \leqslant 1$. In comparison with step sizes requirements drawn from Corollary~\ref{pr_step_size_tr}, where convergence required $\alpha \geqslant \frac{1}{2}$, averaging allows larger step sizes. 
\smallbreak
A different behavior is expected from $\alpha$ and $\beta$: on the one hand, an ideal step size should be large ($\alpha$ small), and on the other hand, we aim at converging as fast as possible ($\beta$ large). The term that forgets the initial conditions behaves as $\mathcal{O}(\frac{1}{t^\beta})$, and the variance term as $\mathcal{O}(\frac{1}{t^\alpha})$ if $\beta \neq \alpha$. When $\alpha = \beta$, the variance term behaves as $\mathcal{O}(\frac{\text{log}(t)}{2t^\beta})$. Hence, a natural choice is $\alpha = \beta = \frac{1}{2}$, retrieving results from~\cite[Theorem 4]{bach2011}~\cite[Table 2]{taylor2021stochastic} in discrete-time optimization.

\subsubsection{Weighted averaging}

Polyak-Ruppert averaging performs uniform averaging of any trajectory $X_t$ over the time step. We introduce weighted averaging to analyze SGD, that is defined with respect to a function $u_t \geqslant0$~\cite{leroux2019}:
\begin{equation*}
        \bar{x}^u_t = \frac{1}{\int_0^t  u_sds}\int_{0}^t u_s x_s ds.
\end{equation*}
Under weighted averaging, and introducing $C^u_t = \frac{u_t}{\int_0^t  u_sds}$, the SDE is given by:
\begin{equation}
\label{ODE_mean_u}
    \begin{aligned}
        & dX_t = -h_t\nabla f(X_t)dt + h_t(\gamma \Sigma(X_t))^{1/2} dB_t, \\
        & d\bar{X}^u_t = (X_t - \bar{X}^u_t)C^u_tdt.
        \end{aligned}
    \end{equation}
We study convergence of this generalized version of Polyak-Ruppert averaging, and compare it to traditional averaging techniques.

\begin{theorem}
\label{pr_avg_conv_u_1}
Let $\mu \geqslant 0$. Let $d \in \mathbf{N}$ be the dimension, $f \in \mathcal{F}_{\mu, \infty}$ be a twice continuously differentiable (possibly strongly convex) function and $(X_t, \bar{X}_t)$ be trajectories generated by an SDE under generalized averaging~\eqref{ODE_mean_u}. Assuming $\dot{\left(\frac{\int_0^tu_sds}{2h_t}\right) } 2\mu u_t$, the function
\begin{equation*}
    \mathcal{V}(X_t,\bar{X}^u_t, t) = \frac{u_t}{C^u_t}(f(\bar{X}_t^u) - f_\star) + \frac{u_t}{2h_t}\|X_t - x_\star\|^2,
\end{equation*}
 verifies $\frac{d}{dt}\mathbf{E}\mathcal{V}(X_t,\bar{X}^u_t, t)\leqslant \frac{1}{2}u_th_t\gamma\mathbf{E}{\rm Tr}(\Sigma(X_t))$.
\smallbreak
\noindent Then, it holds that $\displaystyle  \mathbf{E}[f(\bar{X}^u_t) - f_\star] \leqslant \frac{\|x_0 - x_\star \|^2u_0}{2h_0\int_0^t  u_sds} + \frac{\gamma}{4\int_0^t  u_sds}\int_0^t  u_sh_s\mathbf{E}{\rm Tr}(\Sigma(X_s))ds$.
\end{theorem}
\begin{proof}
The proof follows those from Theorem~\ref{pr_avg_conv} and Corollary~\ref{pr_step_size_tr_avg} with $\frac{1}{t} \rightarrow C^u_t$.
\end{proof}

Under the convexity assumption ($\mu=0$), $u_t$ verifies $\dot{\left(\frac{\int_0^tu_sds}{2h_t}\right)} \leqslant 0$. Assuming step sizes of the form $h_t = \frac{1}{(t+1)^\alpha}$ and an averaging function $u_t = \frac{1}{(t+1)^\beta}$, with $\alpha, \beta \geqslant 0$, it follows that $\beta \geqslant \alpha$. Recall that the covariance matrix $\Sigma(X_t) \preccurlyeq \Sigma$ is bounded. From Theorem~\ref{pr_avg_conv_u_1}, the terms that forgets the initial conditions behaves as $\mathcal{O}(\frac{1}{(t+1)^{1-\beta}})$, and the variance term as $\mathcal{O}(\frac{1}{(t+1)^\alpha})$. Both terms converge at same rate for $\alpha = \beta = \frac{1}{2}$ (up to $\log{(t+1)}$). In discrete time, similar convergence results have been derived for SGD under Polyak-Ruppert averaging~\cite[Theorem 6]{bach2011}.

\smallbreak
Under the strong convexity assumption ($\mu > 0$), polynomial convergence can be reached for the term that contains the initial conditions. However, the variance term cannot converge faster than the step size. Recalling that the covariance matrix $\Sigma(X_t) \preccurlyeq \Sigma$ is bounded, if $u_t = (t+1)^\beta$ and $h_t = (t+1)^{-\alpha}$ with $\alpha, \beta \geqslant 0$, the condition  $\dot{\left(\frac{\int_0^tu_sds}{2h_t}\right)} \leqslant 2\mu u_t$ implies that $\alpha = 0$. The variance term is then exactly equal to the step size $h_t=h_0$. Hence, weighted averaging allows a better convergence for the terms containing initial conditions, but does not play a role in the variance term. To conclude, weighted averaging does not improve convergence results obtained under Polyak-Ruppert averaging. The trade-off between the forgetting of the initial conditions and the noise term mostly relies on step sizes. 
\smallbreak

We have analyzed convergence of SGD together with averaging techniques using approximating SDEs~\eqref{sme_general}. Continuous-time analyses lead to similar convergence results, while benefiting from simpler formulations and fewer assumptions especially on step sizes. Using this approach, we analyzed the trade-off between nonuniform averaging and step sizes, paving the way to a better understanding of averaging techniques. In the next session, we explore new convergence analyses for stochastic accelerated methods.

\section{Accelerating the gradient flow}
\label{sec:extensions}

For both stochastic and deterministic models, we have retrieved known convergence results for continuous-time models approximating optimization methods. In this section, we provide convergence guarantees for a family of second-order gradient flows, including in particular AGF~\eqref{eq:nest_flow}.
\smallbreak
In the deterministic setting, convergence of gradient descent was improved using a momentum. In this section, let $f \in \mathcal{F}_{0, \infty}$ be a twice continuously differentiable function and $\gamma >0$ be constant step sizes. An approximating SDE (for order-1 weak approximations) for Nesterov's accelerated gradient is given by~\cite[Theorem~16,~Section~4.4]{Li2019}:
\begin{equation}
\label{eq:acc_gf}
    d^2X_t + \frac{3}{t} dX_t + \nabla f(X_t)dt + \sqrt{\gamma \Sigma(X_t)}dB_t = 0.
\end{equation}
 As for SGD, the function $\mathcal{V}(x, \dot{x}, t) = t^2(f(x) - f_\star) + 2 \|(x-x_\star) + \frac{t}{2}\dot{x}\|^2$ obtained from Theorem~\ref{conv_theorem_order2} does not allow to conclude about convergence to a stationary point of trajectories $X_t$ generated by the stochastic accelerated gradient flows~\eqref{eq:acc_gf}. Using the bounded covariance assumption $\Sigma(X_t) \preccurlyeq \Sigma$ and applying Ito's formula to $\mathcal{V}(\cdot)$ along $X_t$:
\begin{equation*}
    \mathbf{E}[f(X_t) - f_\star] \leqslant \frac{\|x_0 - x_\star\|^2}{t^2} + \gamma {\rm Tr}(\Sigma)3t.
\end{equation*}
In the following, we explore Polyak-Ruppert averaging together with diminishing step sizes, to analyze convergence of second-order SDEs.

\subsection{Averaging does not preserve convergence rates}
Averaging was a key to success for improving convergence of SGD (see Section~\ref{sec:sde_for_sge}). It is natural to wonder if averaging preserves the acceleration of Nesterov's gradient flow~\cite{Su2015}. Let us define the stochastic accelerated gradient flow under Polyak-Ruppert averaging:
\begin{equation}
\label{eq:acc_gf_avg}
\begin{aligned}
    & d^2X_t + \frac{3}{t} dX_t + \nabla f(X_t)dt + \sqrt{\gamma \Sigma}dB_t = 0, \\
    & d\bar{X}_t = \frac{X_t - \bar{X}_t}{t}dt.
\end{aligned}
\end{equation}
The family of quadratic functions into consideration is given by:
\begin{equation}
\label{lyapunov_acc_sto}
    \mathcal{V}_{a^{(1)}_t, a^{(2)}_t, P_t}(X_t, \bar{X}_t, t) = \begin{pmatrix} a^{(1)}_t \\ a^{(2)}_t \end{pmatrix}^\top \!\!\! \begin{pmatrix} f(X_t) - f_\star\\ f(\bar{X_t}) - f_\star\end{pmatrix} + \begin{pmatrix} \dot{X_t} \\  X_t - X_\star \\ \bar{X_t} - X_\star\end{pmatrix}^\top  \!\!\! \left(P_t\otimes I_d\right) \begin{pmatrix} \dot{X_t} \\  X_t - X_\star \\ \bar{X_t} - X_\star \end{pmatrix},
\end{equation}
where $P_t \succcurlyeq 0$ and $a^{(1)}_t, a^{(2)}_t \geqslant 0$ are continuously differentiable functions.

\begin{theorem}
\label{acc_sde_theorem}
Let $a^{(1)}_t, a^{(2)}_t \geqslant 0$ and $P_t \succcurlyeq 0$ be continuously differentiable with respect to time. Let $f \in \mathcal{F}_{0, \infty}$ be a twice continuously differentiable function and $(X_t, \bar{X}_t)$ be a trajectory generated by the stochastic accelerated gradient flow under Polyak-Ruppert averaging~\eqref{eq:acc_gf_avg} with constant step sizes ($h_t = 1$). Let in addition  $\mathcal{V}=\mathcal{V}_{a^{(1)}_t, a^{(2)}_t, P_t}(\cdot)$ be a quadratic function~\eqref{lyapunov_acc_sto}. Then it holds that:
\begin{itemize}
    \item When $a^{(1)}_t=0$, if the function $\mathcal{V}$ verifies $\frac{d}{dt}\mathbf{E}\mathcal{V}(X_t,\bar{X}_t, t)\leqslant {\rm Tr}(2p^{(11)}_t\gamma \Sigma(X_t))$, then $\mathcal{V}=0$.
    \item When $a^{(2)}_t=0$, the function
    \begin{equation*}
        \mathcal{V}(X_t, t) = a^{(1)}_t(f(X_t) - f_\star) + \frac{1}{2a^{(1)}_t}\|a^{(1)}_t \dot{X}_t + \dot{a}^{(1)}_t(X_t - x_\star)\|^2,
    \end{equation*}
    verifies $\frac{d}{dt}\mathbf{E}\mathcal{V}(X_t,t)\leqslant \mathbf{E}{\rm Tr}(2a^{(1)}_t\gamma \Sigma(X_t))$, with $a^{(1)}_t \leqslant t^2$.
    Furthermore, it holds that $\displaystyle \mathbf{E}[f(X_t) - f_\star] \leqslant \frac{2\|x_0 - x_\star \|^2}{a^{(1)}_t} + \frac{1}{2a^{(1)}_t}\gamma \int_0^t  a^{(1)}_s \mathbf{E}{\rm Tr}(\Sigma(X_s))ds.$
\end{itemize}
\end{theorem}
\begin{proof}
The proof follows from Theorem~\ref{acc_ode_conv} (see Appendix~\ref{sec:A21}). The first statement holds when considering a function $\beta_t \geqslant 0$ instead of $\frac{3}{t}$.
\end{proof}
Without further assumptions on the covariance $\Sigma_t$, the accelerated gradient flow under Polyak-Ruppert averaging~\eqref{eq:acc_gf_avg} with constant step sizes admits no quadratic function that allows to forget the initial conditions while reducing the variance term. Therefore, Polyak-Ruppert averaging plays a different role in the stochastic accelerated gradient flow compared to the stochastic gradient flow approximating SGD~\eqref{ODE_mean1}.

\subsection{Diminishing step sizes do not help preserve acceleration}

Averaging under constant step sizes was not conclusive for finding a convergence guarantee for Nesterov's accelerated gradient flow with a diffusion term. We consider a second-order non-autonomous stochastic gradient flow with time-varying step sizes $h_t \geqslant 0$,
\begin{equation}
\label{acc_step_ode}
    d^2X_t + \beta_t dX_t + h_t\nabla f(X_t)dt + h_t\sqrt{\gamma\Sigma(X_t)}dB_t = 0.
\end{equation}
To study the convergence of ~\eqref{acc_step_ode}, we generate a quadratic function within the class~\eqref{lyapunov_order2}.

\begin{theorem}
\label{theo_accstepsde}
Let $h_t \geqslant 0$, $\gamma > 0$ and $a_t \geqslant 0$ be continuously differentiable with respect to time. Let $f\in \mathcal{F}_{0, \infty}$ be a twice continuously differentiable function and $X_t$ be a trajectory generated by the stochastic second-order gradient flow~\eqref{acc_step_ode}. Assuming $\dot{a}_t \leqslant a_t \frac{2}{3}(\beta_t + \frac{1}{2} \frac{\dot{h}_t}{h_t})$ and $t\rightarrow \frac{(\dot{a}_t)^2}{2h_ta_t}$ a decreasing function, the function
\begin{equation*}
    \mathcal{V}(X_t) = a_t(f(X_t)-f_\star) + \frac{1}{2h_ta_t} \|a_t \dot{X}_t + \dot{a}_t(X_t - x_\star) \|^2,
\end{equation*}
verifies $\frac{d}{dt}\mathbf{E}\mathcal{V}(X_t,t) \leqslant \frac{1}{4}\mathbf{E}{\rm Tr}(a_th_t\Sigma(X_t))\gamma$.
\end{theorem}
\begin{proof}
This result is obtained by extending the LMI for ODEs from Theorem~\ref{acc_ode_conv} and Corollary~\ref{coro_accparam} to time-varying step sizes.
\end{proof}

To compare to previous results, we consider parametrized step sizes $h_t = \frac{1}{(t+1)^\alpha}$ and SDEs with $\beta_t=\frac{b}{t}$, where $\alpha, b > 0$. We derive a convergence bound using Theorem~\ref{theo_accstepsde} together with Theorem ~\ref{acc_ode_conv}, that $\dot{a}_t \leqslant \beta\frac{a_t}{t}$, where $\beta \leqslant \min(\frac{2b-\alpha}{3}, 2 - \alpha)$:
\begin{equation*}
    \mathbf{E}[f(X_t) - f_\star] \leqslant \frac{\beta^2}{t^\beta}\|x_0 -x_\star\|^2 + \frac{\gamma}{4t^\beta}\int_0^t \frac{s^{\beta}}{(s+1)^{ \alpha}}\mathbf{E}{\rm Tr}(\Sigma(X_s))ds.
\end{equation*}
On the one hand, the smaller the step sizes, the better the convergence for the term that contains the initial conditions. On the other hand, under bounded covariance $\Sigma(X_t) \preccurlyeq \Sigma$, the variance term behaves as $\mathcal{O}(\frac{1}{t^\beta})$ if $\beta \leqslant \alpha - 1$, and as $\mathcal{O}(\frac{1}{t^{\alpha-1}})$ otherwise (convergence requiring then $\alpha \leqslant 1$ and $\beta \leqslant 1$). For Nesterov's accelerated gradient flow with $b=3$, we have $\beta = 2 - \alpha \leqslant \alpha - 1$, and therefore $\alpha \geqslant \frac{3}{2}$. Taking $\alpha = \frac{3}{2}$, a convergence bound is given by:
\begin{equation*}
    \mathbf{E}[f(X_t) - f_\star] \leqslant \frac{9}{4\sqrt{t}}\|x_0 -x_\star\|^2 + \frac{\log{t}}{\sqrt{t}}\gamma{\rm Tr}(\Sigma).
\end{equation*}
We retrieve the result from Corollary~\ref{pr_step_size_tr_avg} for the SDE approximating SGD under Polyak-Ruppert averaging. Yet this result requires smaller step sizes $(\alpha \geqslant \frac{3}{2})$. It does not seem possible to accelerate SGD with diminishing step sizes. Ghadimi and Lan~\cite[Corollary 3]{Ghadimi15anoptimal} proved a convergence bound for a stochastic accelerated gradient method with  $\beta=2$ and $\alpha = \frac{1}{2}$, that we do not retrieve. Yet, in their approach, functions are minimized over a compact convex domain, whereas our approach focuses on an unbounded domain.

\section{Conclusion and future work}
\label{sec:future_work}
We have developed a systematic approach for finding quadratic Lyapunov functions for families of ODEs and SDEs approximating SGD. Verifying such a Lyapunov function is cast as verifying the feasibility of a small-sized LMI. From this formulation, it is possible to efficiently search for quadratic Lyapunov functions for arriving to convergence bounds. While we retrieve convergence guarantees similar to those of discrete-time systems, continuous-time models require less assumptions on the problem classes and can be analyzed through shorter proofs.

While obtaining guarantees for stochastic optimization methods might be tedious, the SDE approach allows for simpler analyses of the trade-off between the variance term and the term that forgets the initial conditions. A shortcoming of this approach is that this analysis does not include approximation guarantees between optimization methods and their continuous-time counterparts. In the deterministic setting, stability techniques are often developed to quantify this approximation efficiently~\cite{Gautschi1997}. In stochastic analysis, stochastic modified equation have been introduced by Li et~al.~\cite[Theorem 1]{li2017stochastic} to better approximate SGD, and stochastic methods with momentum. For non-convex functions, some analyses have also been done by Shi et~al~\cite{shi2020learning}. However, these approximation theorems often require many assumptions on the class of functions, which we believe could be further simplified using computer-assisted proofs.

Concerning possible extensions, this work relies on a specific family of quadratic Lyapunov functions provided by~\eqref{lyapunov_order2} and~\eqref{lyap_conv}. Whereas it was sufficient for our purposes, it turns out that it is possible to extend it by taking into account terms of the forms $\int_0^t f(X_s)ds$ or integrals of the quadratic terms that are used in our Lyapunov functions. While we did not consider those terms here, they could be useful for studying other methods or in other settings. The attentive reader might also have realized that the PEP technique for continuous-time systems can be applied without difficulty to differential~\cite[Section 3.2]{bolte2009} and monotone inclusions~\cite{Bot2016, Bot2017} problems. For monotone inclusions, interpolation results for casting the PEPs as semidefinite programs can be found in~\cite{ryu2020}. Finally, we note that it is still not clear how to use PEP-related techniques for directly studying higher-order methods and assumptions (already appearing in the variance term in the stochastic setting), which are also common for continuous-time systems~\cite{ATTOUCH2016}, or equivalents in the monotone inclusion setting~\cite{Chavdadora2021, bot2022}. The problem here is the lack of a clean performance estimation reformulation, beyond the somewhat indirect approach by~\cite{de2020worst}. We leave those investigations for future work.

\section*{Acknowledgments}
This work was funded by MTE and the Agence Nationale de la Recherche as part of
the ``Investissements d’avenir'' program, reference ANR-19-
P3IA-0001 (PRAIRIE 3IA Institute). We also acknowledge
support from the European Research Council (grant SEQUOIA
724063).

\bibliographystyle{siamplain}
\bibliography{references}

\begin{thebibliography}{10}

\bibitem{Attouch2019}
{\sc H.~Attouch, Z.~Chbani, and H.~Riahi}, {\em Rate of convergence of the
  nesterov accelerated gradient method in the subcritical case $\alpha
  \leqslant 3$.}, ESAIM: Control, Optimisation and Calculus of Variations, 25:2
  (2019).

\bibitem{ATTOUCH2016}
{\sc H.~Attouch, J.~Peypouquet, and P.~Redont}, {\em Fast convex optimization
  via inertial dynamics with hessian driven damping}, Journal of Differential
  Equations, 261 (2016), pp.~5734--5783.

\bibitem{bach2011}
{\sc F.~Bach and E.~Moulines}, {\em Non-asymptotic analysis of stochastic
  approximation algorithms for machine learning}, in {Neural Information
  Processing Systems (NIPS)}, 2011.

\bibitem{bansal2019potential}
{\sc N.~Bansal and A.~Gupta}, {\em Potential-function proofs for gradient
  methods}, Theory of Computing, 15 (2019), pp.~1--32.

\bibitem{Bot2016}
{\sc R.~I. Bo\c{t} and E.~R. Csetnek}, {\em Second order forward-backward
  dynamical systems for monotone inclusion problems}, SIAM Journal on Control
  and Optimization, 54 (2016), pp.~1423--1443.

\bibitem{Bot2017}
{\sc R.~I. Bo\c{t} and E.~R. Csetnek}, {\em A dynamical system associated with
  the fixed points set of a nonexpansive operator}, J Dyn Diff Equat, 29
  (2017), p.~155–168.

\bibitem{bolte2009}
{\sc J.~Bolte, A.~Daniilidis, O.~Ley, and L.~Mazet}, {\em {Characterizations of
  Lojasiewicz inequalities: Subgradient flows, talweg, convexity}},
  {Transactions of the American Mathematical Society (AMS)}, 362 (2010),
  pp.~3319--3363.

\bibitem{bot2022}
{\sc R.~I. Bot, E.~R. Csetnek, and D.-K. Nguyen}, {\em Fast {OGDA} in
  continuous and discrete time}.
\newblock preprint, 2022, \url{https://arxiv.org/pdf/2203.10947.pdf}.

\bibitem{bottou2007tradeoffs}
{\sc L.~Bottou and O.~Bousquet}, {\em The tradeoffs of large scale learning},
  in Advances in Neural Information Processing Systems (NIPS), 2007.

\bibitem{Chavdadora2021}
{\sc T.~Chavdarova, M.~I. Jordan, and M.~Zampetakis}, {\em Last-iterate
  convergence of saddle point optimizers via high-resolution differential
  equations}.
\newblock OPT21: 13th Annual Workshop for Optimization in Machine Learning,
  2021.

\bibitem{de2020worst}
{\sc E.~De~Klerk, F.~Glineur, and A.~B. Taylor}, {\em Worst-case convergence
  analysis of inexact gradient and newton methods through semidefinite
  programming performance estimation}, SIAM Journal on Optimization, 30 (2020),
  pp.~2053--2082.

\bibitem{defazio2014}
{\sc A.~Defazio, F.~Bach, and S.~Lacoste-Julien}, {\em {SAGA}: A fast
  incremental gradient method with support for non-strongly convex composite
  objectives}, in Advances in Neural Information Processing Systems (NIPS),
  2014.

\bibitem{Diakonikolas2021}
{\sc J.~Diakonikolas and M.~Jordan}, {\em Generalized momentumbased methods: A
  hamiltonian perspective}, SIAM Journal on Optimization, 31(1) (2021),
  p.~915–944.

\bibitem{drori2012performance}
{\sc Y.~Drori and M.~Teboulle}, {\em Performance of first-order methods for
  smooth convex minimization: a novel approach}, Mathematical Programming, 145
  (2014), pp.~451--482.

\bibitem{2021taylordaspremont}
{\sc A.~d’Aspremont, D.~Scieur, and A.~Taylor}, {\em Acceleration methods},
  Foundations and Trends in Optimization, 5 (2021), pp.~1--245.

\bibitem{fazlyab2018analysis}
{\sc M.~Fazlyab, A.~Ribeiro, M.~Morari, and V.~M. Preciado}, {\em Analysis of
  optimization algorithms via integral quadratic constraints: Nonstrongly
  convex problems}, SIAM Journal on Optimization, 28 (2018), pp.~2654--2689.

\bibitem{Gautschi1997}
{\sc W.~Gautschi}, {\em {Numerical Analysis}}, Springer Science and Business
  Media,  (2011).

\bibitem{goujaud2022}
{\sc B.~Goujaud, C.~Moucer, F.~Glineur, J.~Hendrickx, A.~Taylor, and
  A.~Dieuleveut}, {\em Pepit: computer-assisted worst-case analyses of
  first-order optimization methods in python}.
\newblock preprint, 2022, \url{https://arxiv.org/abs/2201.04040}.

\bibitem{hu2017dissipativity}
{\sc B.~Hu and L.~Lessard}, {\em Dissipativity theory for {N}esterov's
  accelerated method}, in International Conference on Machine Learning (ICML),
  2017.

\bibitem{Johnson2013}
{\sc R.~Johnson and T.~Zhang}, {\em Accelerating stochastic gradient descent
  using predictive variance reduction}, in Advances in Neural Information
  Processing Systems (NIPS), 2013.

\bibitem{Kalman1960}
{\sc R.~E. Kalman and J.~E. Bertram}, {\em Control system analysis and design
  via the "second method" of {L}yapunov: I-continuous-time systems.}, Journal
  of Basic Engineering, 82(2) (1960), pp.~371--393.

\bibitem{Krichene2015}
{\sc W.~Krichene, A.~Bayen, and P.~L. Bartlett}, {\em Accelerated mirror
  descent in continuous and discrete time}, in Advances in Neural Information
  Processing Systems (NIPS), 2015.

\bibitem{leroux2019}
{\sc N.~Le~Roux}, {\em Anytime tail averaging}, preprint arXiv:1902.05083,
  (2019).

\bibitem{lessard2016analysis}
{\sc L.~Lessard, B.~Recht, and A.~Packard}, {\em Analysis and design of
  optimization algorithms via integral quadratic constraints}, SIAM Journal on
  Optimization, 26 (2016), pp.~57--95.

\bibitem{li2017stochastic}
{\sc Q.~Li, C.~Tai, and W.~E}, {\em Stochastic modified equations and adaptive
  stochastic gradient algorithms}, in International Conference on Machine
  Learning (ICML), 2017.

\bibitem{Li2019}
{\sc Q.~Li, C.~Tai, and W.~E}, {\em Stochastic modified equations and dynamics
  of stochastic gradient algorithms i: Mathematical foundations}, The Journal
  of Machine Learning Research (JMLR), 20 (2019), pp.~1--47.

\bibitem{Lojasiewicz1963}
{\sc S.~Lojasiewicz}, {\em Une propriété topologique des sous-ensembles
  analytiques réels, in: Les equations aux dérivées partielles}, Editions du
  centre National de la Recherche Scientifique,  (1963), pp.~87--89.

\bibitem{necoara2019linear}
{\sc I.~Necoara, Y.~Nesterov, and F.~Glineur}, {\em Linear convergence of first
  order methods for non-strongly convex optimization}, Mathematical
  Programming, 175 (2019), pp.~69--107.

\bibitem{nesterov1983}
{\sc Y.~Nesterov}, {\em A method for solving the convex programming problem
  with convergence rate ${O}(1/k^2)$}, Dokl. akad. nauk Sssr, 269 (1983),
  pp.~543--547.

\bibitem{orvieto2020continuoustime}
{\sc A.~Orvieto and A.~Lucchi}, {\em Continuous-time models for stochastic
  optimization algorithms}, in Advances in Neural Information Processing
  Systems (Neur{IPS}), 2020.

\bibitem{langevin1908}
{\sc L.~P}, {\em Sur la théorie du mouvement brownien [on the theory of
  brownian motion]}, C. R. Acad. Sci. Paris., 146, p.~530–533.

\bibitem{polyak1964}
{\sc B.~Polyak}, {\em Some methods of speeding up the convergence of iteration
  methods}, 1964.

\bibitem{polyak1992}
{\sc B.~T. Polyak and A.~B. Juditsky}, {\em Acceleration of stochastic
  approximation by averaging}, SIAM Journal on Control and Optimization, 30
  (1992), pp.~838--855.

\bibitem{ruppert1988}
{\sc D.~Ruppert}, {\em Efficient estimations from a slowly convergent {R}obbins
  {M}onroe process. technical report.}, 1988.

\bibitem{ryu2020}
{\sc E.~K. Ryu, A.~B. Taylor, C.~Bergeling, and P.~Giselsson}, {\em Operator
  splitting performance estimation: Tight contraction factors and optimal
  parameter selection}, SIAM Journal on Optimization, 30 (2020),
  pp.~2251--2271.

\bibitem{Ghadimi15anoptimal}
{\sc G.~L. Saeed~Ghadimi}, {\em Accelerated gradient methods for nonconvex
  nonlinear and stochastic programming}, Mathematical Programming Series A, 156
  (2015), pp.~59--99.

\bibitem{sanzserna2021connections}
{\sc J.~M. Sanz~Serna and K.~C. Zygalakis}, {\em The connections between
  {L}yapunov functions for some optimization algorithms and differential
  equations}, SIAM Journal on Numerical Analysis, 59 (2021), pp.~1542--1565.

\bibitem{scieur2017integration}
{\sc D.~Scieur, V.~Roulet, F.~Bach, and A.~d'Aspremont}, {\em Integration
  methods and accelerated optimization algorithms}, in Advances in Neural
  Information Processing Systems (NIPS), 2017.

\bibitem{shi2018understanding}
{\sc B.~Shi, S.~S. Du, M.~I. Jordan, and W.~J. Su}, {\em Understanding the
  acceleration phenomenon via high-resolution differential equations},
  Mathematical Programming,  (2018), pp.~1--70.

\bibitem{shi2019acceleration}
{\sc B.~Shi, S.~S. Du, W.~J. Su, and M.~I. Jordan}, {\em Acceleration via
  symplectic discretization of high-resolution differential equations}, in
  Advances in Neural Information Processing Systems (Neur{IPS}), 2019.

\bibitem{shi2020learning}
{\sc B.~Shi, W.~J. Su, and M.~I. Jordan}, {\em On learning rates and
  {S}chr\"{o}dinger operators}, preprint arXiv:2004.06977,  (2020).

\bibitem{Su2015}
{\sc W.~Su, S.~Boyd, and E.~J. Cand{{\`e}}s}, {\em A differential equation for
  modeling {N}esterov's accelerated gradient method: Theory and insights}, The
  Journal of Machine Learning Research (JMLR), 17 (2016), pp.~1--43.

\bibitem{suh2022continuoustime}
{\sc J.~J. Suh, G.~Roh, and E.~K. Ryu}, {\em Continuous-time analysis of
  accelerated gradient methods via conservation laws in dilated coordinate
  systems}, in Proceedings of the 39th International Conference on Machine
  Learning, vol.~162 of Proceedings of Machine Learning Research, PMLR, 17--23
  Jul 2022, pp.~20640--20667.

\bibitem{sarkka_solin_2019}
{\sc S.~Särkkä and A.~Solin}, {\em Applied Stochastic Differential
  Equations}, Institute of Mathematical Statistics Textbooks, Cambridge
  University Press, 2019.

\bibitem{tao2018}
{\sc W.~Tao, Z.~Pan, G.~Wu, and Q.~Tao}, {\em Primal averaging: A new gradient
  evaluation step to attain the optimal individual convergence}, IEEE
  transactions on cybernetics, 50 (2018), pp.~835--845.

\bibitem{taylor2021stochastic}
{\sc A.~Taylor and F.~Bach}, {\em Stochastic first-order methods:
  non-asymptotic and computer-aided analyses via potential functions}, in
  Conference on Learning Theory (COLT), 2019.

\bibitem{taylor2018lyapunov}
{\sc A.~Taylor, B.~V. Scoy, and L.~Lessard}, {\em Lyapunov functions for
  first-order methods: Tight automated convergence guarantees}, in
  International Conference on Machine Learning (ICML), 2018.

\bibitem{taylor2017thesis}
{\sc A.~B. Taylor}, {\em Interpolation and performance estimation of
  first-order methods for convex optimization, {P}h{D} {T}hesis, {C}hapter 3 :
  Convex interpolation}, 2017.

\bibitem{2017taylor}
{\sc A.~B. Taylor, J.~M. Hendrickx, and F.~Glineur}, {\em Exact worst-case
  performance of first-order methods for composite convex optimization}, SIAM
  Journal on Optimization, 27 (2017), p.~1283–1313.

\bibitem{taylor2016smooth}
{\sc A.~B. Taylor, J.~M. Hendrickx, and F.~Glineur}, {\em Smooth strongly
  convex interpolation and exact worst-case performance of first-order
  methods}, Mathematical Programming, 161 (2017), pp.~307--345.

\bibitem{wibisono2016variational}
{\sc A.~Wibisono, A.~C. Wilson, and M.~I. Jordan}, {\em A variational
  perspective on accelerated methods in optimization}, in Proceedings of the
  National Academy of Sciences, 2016.

\bibitem{wilson2016lyapunov}
{\sc A.~C. Wilson, B.~Recht, and M.~I. Jordan}, {\em A {L}yapunov analysis of
  accelerated methods in optimization}, The Journal of Machine Learning
  Research (JMLR), 22 (2021), pp.~1--34.

\bibitem{xu2018}
{\sc P.~Xu, T.~Wang, and Q.~Gu}, {\em Accelerated stochastic mirror descent:
  From continuous-time dynamics to discrete-time algorithms}, in International
  Conference on Artificial Intelligence and Statistics (AISTATS), 2018.

\bibitem{xu2018strongly}
{\sc P.~Xu, T.~Wang, and Q.~Gu}, {\em Continuous and discrete-time accelerated
  stochastic mirror descent for strongly convex functions}, in International
  Conference on Machine Learning (ICML), 2018.

\end{thebibliography}

\appendix
\section{Proof for ODEs}

\subsection{Proofs for Theorem~\ref{conv_gf_conv}}
\label{sec:A0_conv}
\begin{proof}
Following the same procedure as for Theorem~\ref{conv_gf_mu} under strong-convexity assumption, and given a quadratic Lyapunov function $\mathcal{V}_{a_t, c_t}(X_t,t) = a_t \left(f(x_t) - f_\star \right) + c_t\|X_t - x_\star\|_2^2$ with $a_t, c_t\geqslant 0$, the maximization problem can be formulated into a semidefinite program
\begin{equation*}
    \begin{aligned}
      0 \geqslant \max_{G \succcurlyeq 0, F \in \mathbf{R}^2} & b_0^\top F + {\rm Tr}(A_0G), \\
      \text{subject to }
    & b_i^\top F + {\rm Tr}(A_iG) \geqslant 0, \ i \in \{1, 2 \}
\end{aligned}
\end{equation*}
where $A_0 = \begin{pmatrix}\dot{c}_t & -c_t \\ -c_t & -a_t \end{pmatrix}$, $A_1 = \begin{pmatrix}0 & 1/2 \\ 1/2 & 0 \end{pmatrix}$, $A_2= \begin{pmatrix}0 & 0 \\ 0 & 0\end{pmatrix}$, $b_0 = \dot{a}_t[1, \ -1]^\top $ $b_1 = [-1,\ 1]^\top $ and $b_2=[1, \ -1]^\top $, whose Lagrangian dual is given by the feasibility problem,
\begin{equation*}
    \begin{aligned}
     \min_{\lambda_t^{(1)}, \lambda_t^{(2)} \geqslant 0} & 0 \ \
      \text{s. t.}  \ S = \begin{pmatrix} \dot{c}_t & -c_t + \frac{\lambda^{(1)}_t}{2} \\ -c_t + \frac{\lambda^{(1)}_t}{2} & -a_t \end{pmatrix} \preccurlyeq 0, \ \dot{a}_t = \lambda^{(1)}_t - \lambda^{(2)}_t.
\end{aligned}
\end{equation*}
This formulation is exactly the LMI feasibility problem from the statement of Theorem~\ref{conv_gf_conv}. 
\smallbreak

\end{proof}

\subsection{Proof for Corollary~\ref{coro_accparam}}
\label{sec:A00}
\begin{proof}
We prove the claim in the convex case; the strongly convex setting is deduced by assuming an exponential form for the parameters $a_t = ae^{t\tau}$, and $P_t=Pe^{t\tau}$, where $\tau >0$ is a (linear) convergence rate to be determined.

From Theorem~\ref{acc_ode_conv}, verifying the Lyapunov function can be framed as verifying the feasibility of an LMI. That is, the desired Lyapunov function can be deduced from the existence of  $\lambda^{(1)}_t, \lambda^{(2)}_t \geqslant 0$ such that:
    \begin{equation*}
    \begin{aligned}
S=&\begin{pmatrix} -\frac{\mu}{2}(\lambda^{(1)}_t + \lambda^{(2)}_t) + \dot{p}_{t}^{(11)} & p^{(11)}_t - \beta_tp^{(12)}_t + \dot{p}^{(12)}_t & - p^{(12)}_t + \frac{\lambda_t^{(1)}}{2} \\ p^{(11)}_t - \beta_tp^{(12)}_t + \dot{p}^{(12)}_t & 2(p^{(12)}_t - \beta_tp^{(22)}_t) + \dot{p}^{(22)}_t & -p^{(22)}_t + \frac{a_t}{2} \\ - p^{(12)}_t + \frac{\lambda_t^{(1)}}{2} & -p^{(22)}_t + \frac{a_t}{2} & 0\end{pmatrix} \preccurlyeq 0, \\ &\dot{a}_t = \lambda^{(1)}_t - \lambda^{(2)}_t.
\end{aligned}
\end{equation*}
Because of the zero diagonal term in $S = (s_{ij})_{1 \geqslant ij\geqslant 3} \preccurlyeq 0$, it follows that $p^{(22)}_t = \frac{a_t}{2}$ and $p^{(12)}_t = \frac{\dot{a}_t}{2}$ (otherwise, the submatrix $\begin{pmatrix} s_{22} & s_{23} \\ s_{23} & 0 \end{pmatrix} \preccurlyeq  0$ has a strictly nonpositive determinant, which is a contradiction).

Let us now assume that $\lambda_t^{(2)}=0$, which leads to $\dot{a}_t = \lambda_t^{(1)}$. Because of the null entries, the matrix $S$ can be reduced to a $2\times2$ semi-definite negative matrix. Such a matrix has nonpositive diagonal terms (since $s_{11}s_{22} \geqslant (s_{12})^2$ and $s_{11} + s_{22} \geqslant 0$), that are $ \dot{p}_{t}^{(11)}  \leqslant 0$ and $2(p^{(12)}_t - \beta_tp^{(22)}_t) + \dot{p}^{(22)}_t$. which simplifies into $\dot{a}_t \leqslant \frac{2}{3}\beta_ta_t$.

For all $\epsilon > 0$, after integration of $\dot{a}_t \leqslant \frac{2}{3}\beta_ta_t$  between $\epsilon$ and $t$, that $a_t \leqslant a_\epsilon e^{\int_{\epsilon}^t  \frac{2}{3}\beta_sds}$. Therefore, $a_t \leqslant \lim_{\epsilon \rightarrow 0}a_\epsilon e^{\int_{\epsilon}^t  \frac{2}{3}\beta_sds}$. Recalling that $P_t$ is positive semidefinite, its determinant is nonnegative $p^{(11)}_tp^{(22)}_t - (p^{(12)}_t)^2 \geqslant 0$, that is $p^{(11)}_t \geqslant \frac{(p^{(12)}_t)^2}{p^{(22)}_t} = \frac{(\dot{a}_t)^2}{2a_t}$. After integration between $0$ and $t$, we obtain the following bound on $a_t$: $\sqrt{a_t} \leqslant \sqrt{a_0} + \frac{\sqrt{p_0^{(11)}}}{2}t$.

In other words, $a_t \leqslant \min\left((\sqrt{a_0} + \frac{\sqrt{p_0^{(11)}}}{2}t)^2, \lim_{\epsilon \rightarrow 0}a_\epsilon e^{\int_{\epsilon}^t  \frac{2}{3}\beta_sds}\right)$.
\end{proof}

\section{Proofs for SDEs}
\label{sec:A1}

\subsection{Proof for Theorem~\ref{pr_avg_conv}}
\label{sec:A11}

\begin{proof}

We rewrite the SDE into 
\begin{equation*}
\begin{aligned}
    \begin{pmatrix} dX_t \\ d\bar{X}_t\end{pmatrix} =& \left(\begin{pmatrix} 0 & 0 \\ \frac{1}{t} & -\frac{1}{t} \end{pmatrix}\otimes I_d\right)\begin{pmatrix} X_t \\ \bar{X}_t\end{pmatrix}dt +\left(\begin{pmatrix}- h_t & 0 \\ 0  & 0  \end{pmatrix}\otimes I_d\right)\begin{pmatrix} \nabla f(X_t) \\ \nabla f(\bar{X}_t) \end{pmatrix}dt \\
    &+\left(\begin{pmatrix}h_t (\gamma\Sigma(X_t))^{1/2} \\ 0\end{pmatrix}\otimes I_d\right)dB_t
\end{aligned}
\end{equation*}
 and denote $Y_t = \begin{pmatrix} X_t \\ \bar{X}_t \end{pmatrix}$. 
 \smallbreak
 We consider the quadratic function~\eqref{lyap_avg} $\mathcal{V}_{a^{(1)}_t, a^{(2)}_t, P_t}(X_t, \bar{X}_t ,t) = \mathcal{V}(Y_t,t)$. Applying Ito's formula, its derivative with respect to time is $\frac{d}{dt} \mathcal{V}(Y_t,t) = \frac{\partial}{\partial t}\mathcal{V}(Y_t,t) + \frac{\partial}{\partial y}\mathcal{V}\frac{dY_t}{dt} + \frac{1}{2}\gamma h^2_t {\rm Tr}[\frac{\partial^2}{\partial Y_t^2}\mathcal{V}(Y_t,t)^\top \begin{pmatrix}\Sigma(X_t) \\ 0\end{pmatrix}]$. By taking the expectation of Ito's formula, we have $\frac{d}{dt} \mathbf{E}\mathcal{V}(Y_t,t) = \mathbf{E}[\frac{\partial}{\partial t}\mathcal{V}(Y_t,t) + \frac{\partial}{\partial y}\mathcal{V}\frac{dY_t}{dt}] +\frac{1}{2}\gamma h^2_t \mathbf{E}{\rm Tr}(\frac{\partial^2}{\partial Y_t^2}\mathcal{V}(Y_t,t)^\top \begin{pmatrix}\Sigma(X_t) \\ 0\end{pmatrix})$.
 \smallbreak
 The variance term $\frac{1}{2}\gamma h^2_t \mathbf{E}{\rm Tr}(\frac{\partial^2}{\partial Y_t^2}\mathcal{V}(Y_t,t)^\top \begin{pmatrix}\Sigma(X_t) \\ 0\end{pmatrix})$ depends on the second derivative of $f$ in the space variable $(X_t)$, and on the covariance $\Sigma_t$. We do not take this term into account in the performance estimation frawework.
 \smallbreak
 We formulate the problem of verifying a quadratic function as verifying the inequality $\frac{d}{dt}\mathbf{E}\mathcal{V}(X_t, \bar{X}_t,t) \leqslant \frac{1}{2}\gamma h^2_t \mathbf{E}{\rm Tr}(\frac{\partial^2}{\partial X_t^2}\mathcal{V}(X_t, \bar{X}_t, t)^\top \Sigma(X_t))$ (that is $\mathbf{E}[\frac{\partial}{\partial t}\mathcal{V}(Y_t,t) + \frac{\partial}{\partial y}\mathcal{V}\frac{dY_t}{dt}] \leqslant 0$) holds for any twice continuously differentiable function $f \in \mathcal{F}_{0, \infty}$, and any trajectory $(X_t, \bar{X}_t)$ generated by the SDE under Polyak-Ruppert averaging~\eqref{ODE_mean1}. 
 \smallbreak
 This problem is equivalent with verifying that $\frac{d}{dt}\mathcal{V}(\tilde{Y}_t,t) \leqslant 0$ holds for any function twice continuously differentiable $f \in \mathcal{F}_{0, \infty}$ and any trajectory $\tilde{Y}_t = \begin{pmatrix} \tilde{X}_t & \tilde{\bar{X}}_t\end{pmatrix}^\top $ generated by deterministic gradient flow from the SDE~\eqref{ODE_mean1} with $\gamma = 0$. 
\smallbreak
We follow the methodology developed in Section~\ref{sec:deterministic} for ODEs. We formulate the verification of such a Lyapunov function as the maximization problem
\begin{equation*}
    \begin{aligned}
      0\geqslant \max_{X_t \in \mathbf{R}^d,  d\in \mathbf{N}, \ f\in \mathcal{F}_{0, \infty}} \ & \ \frac{d}{dt}  \mathcal{V}_{a^{(1)}_t, a^{(2)}_t, P_t}(X_t, \bar{X}_t, t)\\
      \text{subject to }
    \ \dot{X}_t &= -h_t\nabla f(\bar{X}_t), \
        \dot{\bar{X}}_t = \frac{X_t - \bar{X}_t}{t}dt.
\end{aligned}
\end{equation*}
This maximization problem can be formulated as the following SDP program

\begin{equation*}
    \begin{aligned}
    & \min_{\lambda_t^{(i)} \geqslant 0, \ i \in \{1, ..., 6 \}} \ 0 \, \\
&\begin{pmatrix} \dot{p}^{(11)}_t + \frac{2p^{(12)}_t}{t} & \dot{p}^{(12)}_t + \frac{p^{(22)}_t - p^{(12)}_t}{t}& \frac{\lambda^{(6)}_t + \lambda^{(4)}_t - 2h_tp^{(11)}_t}{2} & \frac{a^{(2)}_t}{2t} - \frac{\lambda^{(5)}_t}{2}\\ 

\dot{p}^{(12)}_t + \frac{p^{(22)}_t - p^{(12)}_t}{t} & \dot{p}^{(22)}_t - \frac{2p^{(22)}_t}{t} & -\frac{\lambda^{(6)}_t - 2h_tp^{(12)}_t}{2} & -\frac{a^{(2)}_t}{2t} + \frac{\lambda^{(5)}_t + \lambda^{(3)}_t}{2}\\ 

 \frac{\lambda^{(6)}_t + \lambda^{(4)}_t - 2h_tp^{(11)}_t}{2} & -\frac{\lambda^{(6)}_t - 2h_tp^{(12)}_t}{2} & -a^{(1)}_t & 0 \\ 

\frac{a^{(2)}_t}{2t} - \frac{\lambda^{(5)}_t}{2} & -\frac{a^{(2)}_t}{2t} + \frac{\lambda^{(5)}_t + \lambda^{(3)}_t}{2} & 0 &  0\end{pmatrix} \preccurlyeq 0, \\ 
&\dot{a}^{(1)}_t + \lambda^{(1)}_t + \lambda^{(5)}_t = \lambda^{(4)}_t + \lambda^{(6)}_t, \\ 
&\dot{a}^{(2)}_t + \lambda^{(2)}_t + \lambda^{(6)}_t = \lambda^{(3)}_t + \lambda^{(5)}_t + \dot{a}^{(1)}_t. 
\end{aligned}
\end{equation*}
Note that the Gram matrix $G$ and function value vector $F$ to be introduced are different from the setting explored in Section~\ref{sec:deterministic}, more precisely $G = Q^\top Q \succcurlyeq 0, \ Q = [X_t - x_\star, \bar{X}_t - x_\star, g_t, \bar{g}_t]$ and $F = (f(X_t), f(\bar{X}_t), f_\star)$.

The final inequality follows from Ito's formula.
\end{proof}

\subsection{Proof for Theorem~\ref{acc_sde_theorem}}
\label{sec:A21}
\begin{proof}
From the reasoning from Theorem~\ref{conv_gf_conv} and Theorem~\ref{pr_avg_conv}, worst-case guarantee can be formulated into a maximization problem:
\begin{align*}
     \max_{f \in \mathcal{F}_{0,\infty},  d\in \mathbf{N}, X_t \in \rm{R}^d} \ & \ \frac{d}{dt}\mathcal{V}_{a^{(1)}_t, a^{(2)}_t, P_t}(X_t, \bar{X}_t, t), \\
    \text{s.t. } \ & d^2X_t + \beta_t dX_t + \nabla f(X_t)dt + \sqrt{\gamma \Sigma}dB_t = 0, \\
     & d\bar{X}_t = \frac{X_t - \bar{X}_t}{t}dt.
\end{align*}
A control of this quantity can be achivied with the following LMI, for $\lambda_t^{(i)} \geqslant 0, \ i \in \{1, ..., 6 \}$ and where $*$ corresponds to the symmetric entries of the matrix, and where $\beta_t = \frac{3}{t}$,
\begin{align*}
    &\begin{pmatrix}
    \dot{p}_t^{(11)}  & \dot{p}_t^{(12)} - \beta_t p_t^{(12)}  & \dot{p}_t^{(13)} - \frac{p_t^{(13)}}{t} -  & -p_t^{(11)} & 0 \\
    - 2\beta_t p_t^{(11)}  & +  \frac{p_t^{(13)}}{t} + p_t^{(22)} & p_t^{(12)}\beta_t + p_t^{(23)} & & \\
    * & \dot{p}_t^{(22)} + 2 \frac{p_t^{(22)}}{t} & p_t^{(23)} - 2 \frac{2p_t^{(22)}}{t}   & a_t^{(1)}- p_t^{(12)}  & \frac{a_t^{(2)}}{2t} - \frac{\lambda_t^{(5)}}{2} \\
    & & + \frac{p_t^{(33)}}{t} & + \frac{\lambda_t^{(4)} + \lambda_t^{(6)}}{2} & \\
    * & * & \dot{p}_t^{(33)} -  \frac{2p_t^{(33)}}{t} & -\frac{\lambda_t^{(6)}}{2} - p_t^{(13)}& \frac{\lambda_t^{(5)} + \lambda_t^{(3)} - a_t^{(2)}/t}{2} \\
    * & * & * & 0 & 0 \\
    * & * & * & * & 0
    \end{pmatrix} \preccurlyeq 0, \\
    &\lambda_t^{(6)} + \lambda_t^{(4)} = \lambda_t^{(5)} + \lambda_t^{(1)} +  \dot{a}_t^{(1)},\
    \lambda_t^{(2)} + \lambda_t^{(6)} + \dot{a}_t^{(2)} = \lambda_t^{(3)} + \lambda_t^{(5)}.
\end{align*}
Thus, $p_t^{(11)} = 0 $, and because $P_t$ is positive semidefinite, $p_t^{(12)} = p_t^{(13)} = 0$. If in addition $a_t^{(1)}=0$, the unique feasible Lyapunov function is $\mathcal{V}_{a^{(1)}_t, a^{(2)}_t, P_t}=0$. Otherwise, the LMI can be simplified to the LMI of Theorem~\ref{acc_ode_conv} and the Lyapunov function follows from Corollary~\ref{coro_accparam}. 
\end{proof}

\end{document}